\documentclass[9pt,twocolumn,oneside]{article}
\usepackage{amsmath,amssymb,amsfonts, amsthm}
\usepackage{algorithmic}
\usepackage{graphicx}
\usepackage{textcomp}

\usepackage[utf8]{inputenc}
\usepackage[T1]{fontenc}
\usepackage{algorithm}
\usepackage{import}
\usepackage{url}
\usepackage{float} 
\usepackage[dvipsnames]{xcolor}
\usepackage{array}

\usepackage{mathtools}
\usepackage{subcaption}

\usepackage[colorlinks=true]{hyperref} 

\usepackage{xspace}

\usepackage{bm} 

\usepackage{graphicx}
\graphicspath{{./figures/}} 

\usepackage{parskip} 
\usepackage{comment}
\usepackage{fancyhdr}

\usepackage{tikz}
\usetikzlibrary{automata, positioning}
\usetikzlibrary{external}
\tikzexternalize[prefix=tikz/]

\usepackage{csquotes}
\usepackage[backend=biber,citestyle=authoryear,bibstyle=alphabetic, backref]{biblatex}
\addbibresource{./bibliography/biblio.bib}






\newtheorem{theoreme}{Theorem}
\newtheorem{lemma}[theoreme]{Lemma}
\newtheorem{proposition}[theoreme]{Proposition}
\newtheorem{definition}[theoreme]{Definition}

\newtheorem{remarque}[theoreme]{Remarque}

\newcommand{\reeq}[1]{Equation~\eqref{eq:#1}}

\newcommand{\resec}[1]{Section~\ref{sec:#1}}
\newcommand{\redef}[1]{Definition~\ref{def:#1}}

\newcommand{\refig}[1]{Figure~\ref{fig:#1}}

\newcommand{\relem}[1]{Lemma~\ref{lem:#1}}

\newcommand{\reprop}[1]{Proposition~\ref{prop:#1}}

\newcommand{\reapp}[1]{Appendix~\ref{app:#1}}

\newcommand{\retab}[1]{Table~\ref{tab:#1}}


\newcommand{\resectwo}[2]{Sections~\ref{sec:#1} and~\ref{sec:#2}}


%


\newcommand{\gu}[1]{``#1''}	

\renewcommand{\cite}[1]{\autocite{#1}}
\newcommand{\citet}[1]{\textcite{#1}}   









\newcommand{\enstq}[2]{\left\lbrace#1\left\lvert#2\right.\right\rbrace} 

\DeclareMathOperator*{\argmin}{arg\,min}    

\DeclareMathOperator{\id}{Id}   

\newcommand{\nrm}[1]{\left\lVert#1\right\rVert} 
\newcommand{\nrmop}[1]{\nrm{#1}_\mathrm{op}}    

\newcommand{\abs}[1]{\left\lvert#1\right\rvert} 

\newcommand\ps[2]{\left\langle #1, #2 \right\rangle}   


\newcommand{\diag}[1]{\mathrm{diag}\paren{#1}}  

\newcommand{\eps}{\varepsilon}


\newcommand{\bo}{\mathcal{B}}   

\newcommand{\paren}[1]{\left(#1\right)} 

\newcommand{\ba}{\begin{aligned}}   
\newcommand{\ea}{\end{aligned}}

\newcommand{\cpl}[2]{\paren{#1,\,#2}}   



\newcommand{\nods}{\mathcal{N}}	
\newcommand{\edgs}{\mathcal{E}}	


\newcommand{\node}{n}	


\newcommand{\pol}{\boldsymbol{\pi}}	
\newcommand{\polref}{{\pol_\mathrm{ref}}}	


\newcommand{\prm}{\theta} 
\newcommand{\eprm}{\Theta} 
\newcommand{\prmopt}{\prm^*}    





\newcommand{\ngm}{\mathbf{G}}    
\newcommand{\brn}{\mathcal{R}_0} 

\newcommand{\mdpl}{\mathbf{M}}   
\newcommand{\statiodfmat}[1]{\tilde{\mu}_{#1}}    
\newcommand{\statiod}{\statiodfmat{\mdpl}}    

\newcommand{\statiodref}{\tilde{\mu}_{\polref}}   

\newcommand{\statiodprm}{\tilde{\mu}_\prm}  

\newcommand{\orate}{f}  
\newcommand{\vecorate}{\boldsymbol{\orate}} 

\newcommand{\diagbeta}{\boldsymbol{\beta}} 
\newcommand{\diagdelta}{\boldsymbol{\delta}} 
\newcommand{\diaggamma}{\boldsymbol{\gamma}} 
\newcommand{\diagalpha}{\boldsymbol{\alpha}} 


\newcommand{\mdplref}{\mdpl_{\mathrm{ref}}}  
\newcommand{\statiodmref}{\statiodfmat{\mdplref}}   

\newcommand{\initconta}{\mathfrak{I}_0}   



\newcommand{\epidaccr}{\textsc{Epiloss}\xspace}
\newcommand{\alphalaccr}{\textsc{NoDiffloss}\xspace}
\newcommand{\rapsumaccr}{\textsc{QuickDiffLoss}\xspace}


\newcommand{\epidloss}{\epidaccr}    
\newcommand{\rapsumloss}{\rapsumaccr}
\newcommand{\alphaloss}{\alphalaccr}

\newcommand{\epidpolaccr}{\textsc{Epipol}\xspace}
\newcommand{\alphapolaccr}{\textsc{NoDiffpol}\xspace}
\newcommand{\rapsumpolaccr}{\textsc{QuickDiffpol}\xspace}


\newcommand{\erdosrenyi}{Erd\H{o}s-Rényi\xspace}
\newcommand{\brbalbert}{Barab\'asi-Albert\xspace}
\newcommand{\waxman}{Waxman\xspace}
\newcommand{\relcav}{Relaxed Caveman}







\newcommand{\citepz}{}

\newcommand{\citenyioha}{}

\newcommand{\citehan}{}

\newcommand{\citepj}{}

\newcommand{\citepzejpthirteen}{}
\newcommand{\citepzejpfourteen}{}
\newcommand{\citenpp}{}

\newcommand{\citekermack}{}

\newcommand{\citemagalsixteen}{}
\newcommand{\citemagaleighteen}{}

\newcommand{\citegao}{}

\newcommand{\citearinoseventeen}{}

\newcommand{\citebeaufort}{}
\newcommand{\citecaswell}{}

\newcommand{\citegolub}{}

\newcommand{\citealizon}{}

\newcommand{\citejunling}{}

\newcommand{\citediekmannnetal}{}

\begin{document}
\title{Flow Redirection for Epidemic Reaction-Diffusion Control}

\author{Pierre-Yves Massé\textsuperscript{\textsection}, Quentin Laborde\textsuperscript{\textsection}, Maria Cherifa, Jules Olayé and Laurent Oudre}

\maketitle

\begingroup\renewcommand\thefootnote{\textsection}
\footnotetext{Equal contribution}
\endgroup

\begin{abstract}
We show we can control an epidemic reaction-diffusion on a directed, and heterogeneous, network by redirecting the flows, thanks to the optimisation of well-designed loss functions, in particular the basic reproduction number of the model. We provide a final size relation linking the basic reproduction number to the epidemic final sizes, for diffusions around a reference diffusion with basic reproduction number less than $1$. Experimentally, we show control is possible for different topologies, network heterogeneity levels, and speeds of diffusion. Our experimental results highlight the relevance of the basic reproduction number loss, compared to more straightforward losses.
\end{abstract}

\section{Introduction}
\label{sec:introduction}

Networks are critical infrastructures, whether they are for instance transportation networks \cite{youn2008}, telecommunication networks \cite{pastorsatorras2004, newman2002}, or supply networks \cite{perfido2017}. Unfortunately, they may be invaded by undesirable processes, such as diseases \cite{vanmieghem2014inho} or virus malwares \cite{garetto2003}.
A common framework to model these processes is that of systems of coupled ordinary differential equations, where the equations running in each node (representing populations such as cities or countries) are coupled in some way by the network \cite{nowzari15}. The ODE's they are based on are deterministic, compartmental models, which were introduced at the beginning of the 20\textsuperscript{th} century, following notably \citet{kermack}, and described originally how individuals transitioned from state to state --- healthy, infected, recovered, for instance --- when confronted to a disease \cite{princeton}.

Two majors classes of coupling have emerged. On the one hand, the interactions between populations may be described by a static contact structure \cite{pastor2001, pastor2015, nowzari15}, where individuals in a node remain in the node, and may be infected by people in the neighbouring nodes. On the other hand, in the epidemic reaction-diffusion models, also known as metapopulation models with explicit movement \cite{arino09}, individuals can only be infected by other individuals in the same node, but are allowed to move to neighbouring nodes. Following their apparition in ecology \cite{levins1969}, these models have sparked considerable interest in mathematical epidemiology \cite{brauer2001, vddriessche2002, wang2005, allen07, arino09, tien2015, arino17, bichara2017, gao2019, gao2020}. 

A crucial issue is the control of these undesirable processes. As far as contact models are concerned, a first option to contain the spread is to act on the disease parameters, as the infection rate and the curing rate \cite{gourdin2011, preciado2013optvacc, preciado2014, nowzari2017}. For instance, treatments may reduce the likelihood to get infected, or speed up recovery.
Another option is to modify the network structure. Notably, \citepz \citet{preciado2013} reduce the flows between cities in order to limit the spread of an epidemic. Yet, perhaps surprisingly, the control problem has not been studied so far for reaction-diffusion models. In our work, we therefore propose to address this question. We act on the diffusion between the nodes, rather than on the disease parameters: indeed, diffusion is the specific property of these models, and therefore it makes sense focusing on it. Moreover, diffusion is a control variable truly accessible at all times, even when treatments are not available.

We aim at reducing the final size of the epidemic, that is the final number of individuals who have been infected. Rather than lower the flows, which would seem an obvious way to stop the spread, we merely redirect them: indeed, redirection represents a lighter alteration of the network structure, and can help it keep functioning as normally as possible, which is desirable.
We use the basic reproduction number of the system (introduced in \citediekmannnetal \citet{diekmann90}, see also \citediekmannnetal \citet{MAT09} for further explanations) as criterium to redirect the flows. We aim at minimizing it by gradient descent, with respect to some relevant parameterisation of the diffusion.

We start by giving some background material, discussing related works, and presenting our contributions, in \resec{background}. We then provide a final size relation linking the basic reproduction number, and the final size of the epidemic, in \resec{fsize}. Next, we explicit the optimisation problem in \resec{optim_flows}. Finally, we present the results of our numerical simulations in \resec{experiments}. The code for the simulations, written in Python, is available on the git repository \url{https://reine.cmla.ens-cachan.fr/masse/flow_redirection}.

\section{Background and Contributions}
\label{sec:background}
We start by recalling the definition of metapopulation models with diffusion, and the basic reproduction number criterion for stability (\resec{metapop_models}). Next, we discuss related works on epidemic control (\resec{state_art}) and final size relations (\resec{fsr}). Finally, we state our contributions (\resec{contributions}).

\subsection{Metapopulation Models with Diffusion}
\label{sec:metapop_models}
Let $\mathcal{G}=\cpl{\nods}{\edgs}$ be a strongly connected, directed graph, with nodes set $\nods$, and edges set $\edgs$. 
For each node $\node \in \nods$, we write $\beta_\node$, $\delta_\node$ and $\gamma_\node$ the positive infection, incubation and curing rates respectively, of a scalar SEIR model \cite{princeton}. We write $\diagbeta = \diag{\beta_1,\ldots,\beta_{|\mathcal{N}|}}$ the corresponding diagonal matrix, and likewise for the other coefficients. 
Capital letters like $S$, $E$, $I$ or $R$ are vectors of size $|\mathcal{N}|$, such that for instance, $S_\node$ is the numbers of individuals in compartment \gu{S} of node $\node\in\nods$.
Coupling between nodes is realised by a diffusion matrix, which definition we now recall.
\begin{definition}[Diffusion Matrix]
\label{def:diff_mat}
A diffusion matrix $\mdpl$ on $\mathcal{G}$ first has nonzero off-diagonal entries only for coordinates $\cpl{i}{j}$ such that the edge $i\leadsto j$ belongs to $\edgs$. Secondly, it is Metzler, that is for $i,j\in \nods$, $i\neq j$, we have $\mdpl_{ij}\geq 0$. Then, it is irreducible. Finally, its columns sum to zero. 
\end{definition}
Since $\mathcal{G}$ is strongly connected, such matrices do exist.
Standard Perron-Frobenius theory guarantees that a diffusion matrix $\mdpl$ admits a stationary distribution $\statiod$, that is a positive right eigenvector such that $\mdpl \statiod = 0$, and which coordinates sum to $1$.
The reaction-diffusion extension of the standard SEIR system to a network evolves according to, for all $t\geq 0$,
\begin{equation}
\label{eq:seir-metapop}
\left\lbrace \ba
\frac{dS}{dt} &= - \diagbeta S \odot I + \mdpl S \\
\frac{dE}{dt} &= \diagbeta S \odot I - \diaggamma E + \mdpl E\\
\frac{dI}{dt} &= \diaggamma E  - \diagdelta I + \mdpl I \\
\frac{dR}{dt} &= \diagdelta I + \mdpl R
\ea \right.
\end{equation}
where, for two vectors $U$ and $V$ of equal dimensions, we write $U\odot V$ their coordinate wise product vector, that is $U\odot V = \paren{U_\node V_\node}_\node$. 
For instance, for a node $\node\in\nods$, the equation on $S_\node$ reads: $d S_\node/dt = -\beta_\node S_\node(t)I_\node(t) + \sum_{i=1}^{|\mathcal{N}|} \mdpl_{\node, i} S_i(t)$. 
Standard results --- see \citet{arino09} and references therein --- guarantee that, for all nonnegative initial condition $\big(S(0), E(0), I(0), R(0)\big)$, the solution to \reeq{seir-metapop} is global, remains nonnegative, and converges to a fixed point of the form $\big(S(\infty),0,0,R(\infty) \big)$. Moreover, the total population is preserved: $\sum_{\node\in\nods} S_\node(t)+E_\node(t)+I_\node(t)+R_\node(t)$ is constant. In what follows, we assume it equals $1$.

The \gu{Disease Free Equilibrium} (DFE) $\paren{\statiod, 0, 0, 0}$ is a fixed point of \reeq{seir-metapop}, where there is no disease: all individuals are in the compartment $S$. We want it to be stable, and therefore we recall here a well-studied stability criterion, which we make extensive use of. The stability of the DFE is governed by the spectral radius of the next-generation matrix \cite{diekmann90, MAT09}, which is called the basic reproduction number, and written $\brn$. The DFE is stable if, and only if, we have $\brn < 1$ \cite{arino05}. 
The next-generation matrix in the sense of \citet{MAT09} associated with the system of \reeq{seir-metapop} is $\ngm_\mdpl := \diagbeta \,\diag{\statiod}\,\paren{\mdpl-\diagdelta}^{-1}\,\diaggamma\,\paren{\mdpl-\diaggamma}^{-1}$, where $\diag{\statiod}$ is the diagonal matrix which diagonal coefficients are those of $\statiod$. The basic reproduction number depends on the diffusion matrix $\mdpl$, and we write it $\brn = \brn(\mdpl) = \rho\paren{\ngm_\mdpl}$, where $\rho$ designates the spectral radius.

Using the basic reproduction number as a stability criterion offers several advantages. First, taking it below $1$ is equivalent to ensuring the eigenvalues of the Jacobian of the system of \reeq{seir-metapop} at the DFE have negative real parts, which is the most straightforward stability criterion. Second, since it is the spectral radius of the next-generation matrix, it is obtained through the study of a matrix of order $|\mathcal{N}|$, whereas the Jacobian is of order $4|\mathcal{N}|$. Finally, since flows are not symmetrical, the Jacobian is not symmetrical either. However, the next-generation matrix is positive, so that its spectral radius is differentiable, and an analytical formula exists for its derivative (see \resec{diff_losses}).

Finally, we use the following notion of policy over the network.
\begin{definition}[Policy over a Network]
We call policy a stochastic matrix $\pol$ of order $|\nods|$ such that, for every node $\node\in\nods$, the row $\paren{\pol_{\node,i}}_{i\in\nods}$ is a probability distribution over $\nods$, and such that $\pol_{\node,i}$ is nonzero if, and only if, $\edgs$ contains an edge $\node \leadsto i$.
\end{definition}

\subsection{Network Deterministic Epidemic Control}
\label{sec:state_art}
Control of deterministic epidemic processes on networks is an alive direction of research: see for instance the review \citet{nowzari15}. A first option is to consider dynamic, or online, controls \cite{ruan2012, enyioha2013, han2015}. For instance, \citenyioha \citet{enyioha2013} propose a \emph{bio-inspired} strategy where nodes are allowed to to go into \emph{sleep} or \emph{dormant} states which reduce their susceptibility to the disease, and compute the optimal probabilities of nodes going into this state to prevent a small infection resulting in an epidemic in the network. \citehan \citet{han2015} adopt a different approach by defining a set of networks consistent with early observed data, then finding the optimal allocation of resources to control the worst-case spread that can take place in the aforementioned set of networks.

On the other hand, the control may be offline. One common strategy consists in reducing the maximum real part of the eigenvalues of the Jacobian of the system at the DFE, so as to ensure its exponential stability. The Jacobian typically writes $J=\diagbeta A - \diagdelta$, where $A$ is the adjacency matrix of the network, and $\diagbeta$, $\diagdelta$ contain epidemiological parameters linked with the disease. Two strategies are then available. On the one hand, one may act on the network structure, that is on $A$. For instance, \citepj \citet{preciado2009} devise well-suited values for the connectivity radius of a random geometric graph, ensuring stability of the DFE, while \citepz \citet{preciado2013} reduce the flows between cities. On the other hand, the control may be exercised on the epidemic parameters. In \citepzejpthirteen \citet{preciado2013optvacc} and \citepzejpfourteen \citet{preciado2014}, the infectivity and curing parameters, gathered in $\diagbeta$ and $\diagdelta$, are optimised under two objectives. First, for a given budget allocated to the tuning of the parameters, the authors look to minimise as much as possible the maximum real part of the eigenvalues. Conversely, they look for the minimal budget ensuring this maximum real part is taken below some threshold. In particular, \citenpp \citet{nowzari2017} show these optimisation problems may be cast as geometric programs, thus allowing their solving with standard solvers.

\subsection{Final Size Relations}
\label{sec:fsr}

The final size of an epidemic is the asymptotic number of individuals confronted to the disease. For the model of \reeq{seir-metapop}, it is given by: $\sum_{\node\in\nods} R_\node(\infty)$, that is the sum over all nodes $\node\in\nods$ of the final number of recovered individuals. The final size is a crucial outcome of the epidemic: estimating it and studying its dependency on model parameters, has been given plenty of attention: see notably the review \citejunling \citet{ma2006}. \citekermack \citet{kermack} established the following well-known equation for a scalar, deterministic model: the final size $r(\infty)$ (the lowercase emphasises it is a real number) and the basic reproduction number $\brn$ are linked by 
\begin{equation}
\label{eq:fsize_sir_scalar}
\brn r(\infty) + \log\paren{1-r(\infty)} = 0.
\end{equation}
One crucial property of \reeq{fsize_sir_scalar} is the monotonous link it shows between $\brn$, which concerns the onset of the epidemic, and its final size, which concerns its outcome. Several studies have since been devoted to extending this relation to more evolved models \cite{arino2007}. \citemagalsixteen \citet{magal2016} and \citemagaleighteen \citet{magal2018} study the final size of a multi-group SIR epidemic model. Other works study it for mixing models on networks \cite{andreasen2011, brauer2008}. 
Finally, on a slightly different perspective, \citegao \citet{gao2020dispersal} studies the sensitivity of the size of the endemic equilibrium with respect to variables of interest, such as the rate of diffusion, or the basic reproduction number, in the specific context of a SIS model with diffusion. However, up to our best knowledge, there is no general result expressing the final size of an epidemic reaction-diffusion model as a function of its basic reproduction number.

\subsection{Contributions}
\label{sec:contributions}
We show we can control the epidemic system of \reeq{seir-metapop}, running on a directed, heterogeneous network, by redirecting flows of individuals.
Theoretically, we provide a final size relation linking the basic reproduction number and the final size of the epidemic (\reprop{fsize-r0-smaller-one}). It applies to reaction-diffusion processes with diffusion matrices close to a reference diffusion matrix, whose reproduction number is strictly less than $1$. To obtain this result, we prove a uniform stability result (\relem{unif-stab-dfe}) which extends uniformly, in some neighbourhood of the reference diffusion matrix, the standard stability criterion given by the next-generation method.

Then, we present our methodology based on the control of the basic reproduction number to control the epidemic spread. We design a parameterisation of the diffusion allowing us to redirect the flows, and define several losses we compare to the basic reproduction number loss. The losses are differentiable so that, even if the optimisation problem is non linear and not symmetrical, we can solve it by gradient descent.

Finally, we validate our approach on numerical simulations, with synthetic data presenting different topologies, different levels of network heterogeneity, and a range of diffusion speeds. The procedure works for general reaction models: for the sake of simplicity, we conduct the analysis with SEIR, but also carry on experiments with a more complex SEPIR reaction. 


\section{Final Size Relation}
\label{sec:fsize}

We first present our final size relation, together with a crucial intermediary lemma in \resec{brn_fsize}. We then prove the relation in \resec{pfsize-relation}.

\subsection{Linking Basic Reproduction Number and Final Size}
\label{sec:brn_fsize}
Consider some diffusion matrix $\mdplref$ such that $\brn(\mdplref)<1$. Then, for diffusion matrices $\mdpl$ close enough to $\mdplref$, the ratio between the final size and the initial number of infected individuals is controlled by monotonic functions of the basic reproduction numbers $\brn(\mdpl)$. Let us first define two quantities needed to state formally this result. For every diffusion matrix $\mdpl$, we write $v^\mdpl$ a non-negative eigenvector (thus, not zero) summing up to $1$, associated to the spectral radius of the next-generation matrix with large domain \cite{MAT09}. Vectors $v^\mdpl$ exist because this matrix is non-negative \cite{meyer2000}.
Then, for every couple $\cpl{E^\mdpl(0)}{I^\mdpl(0)}$ of vectors, we write
$\mathfrak{I}^\mdpl (0) = \sum_{\node\in\nods} \big(E^\mdpl_{\node}(0) + I^\mdpl_{\node}(0)\big)$ the initial number of individuals either exposed or infected. We can now state our final size relation.
\begin{proposition}[Final size relation]
\label{prop:fsize-r0-smaller-one}
Let $\mdplref$ be a diffusion matrix, such that $\brn(\mdplref) < 1$. Then, for every $\eps>0$ small enough, there exists a ball $\bo$ of diffusion matrices around $\mdplref$, and $\eta >0$ such that, for every $\mdpl\in\bo$, for every initial condition 
$\nrm{\big(S^\mdpl (0), E^\mdpl(0), I^\mdpl(0), R^\mdpl(0)\big) - \big(\statiodmref, 0, 0, 0\big)} < \eta$,
such that the relation $\cpl{E^\mdpl(0)}{I^\mdpl(0)}= \initconta^\mdpl(0) v^\mdpl$ holds for some vector $v^\mdpl$, we have 
\begin{multline}
\label{eq:fsize}
\frac{\initconta^\mdpl(0)}{1 - \paren{1-\varepsilon}\brn(\mdpl)} \leq \sum_{\node\in\nods}R_\node^\mdpl(\infty) \\
\leq \frac{\initconta^\mdpl(0)}{1 - \paren{1+\varepsilon}\brn(\mdpl)},
\end{multline}
where $R^\mdpl(\infty)$ is the asymptotic vector of individuals in compartment $R$ of the solution of \reeq{seir-metapop}.
\end{proposition}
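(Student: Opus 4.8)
The plan is to integrate the $(E,I)$-subsystem of \reeq{seir-metapop} over $[0,\infty)$ and then compare the resulting inequalities with the action of the large-domain next-generation matrix on its Perron vector $v^\mdpl$. Write $y(t)=\paren{E^\mdpl(t),I^\mdpl(t)}$ for the concatenated vector of infected compartments, and note that $y$ solves \emph{exactly} the linear (in $y$), time-dependent system $y'=(F(t)-V)y$, where $V$ is the block matrix with diagonal blocks $\diaggamma-\mdpl$ and $\diagdelta-\mdpl$ and lower-left block $-\diaggamma$, and $F(t)$ is the matrix whose only nonzero block is $\diag{S^\mdpl(t)}\diagbeta$, acting on the $I$-component; no linearisation is involved, the only time dependence sitting in the susceptible profile $S^\mdpl(t)$. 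Since the columns of $\mdpl$ sum to zero and $\diaggamma,\diagdelta>0$, the diagonal blocks of $V$ are nonsingular $M$-matrices, hence $V^{-1}\geq 0$; and $K_L:=F^\star V^{-1}$, where $F^\star$ is $F(t)$ with $S^\mdpl(t)$ replaced by the stationary profile $\statiod$, is exactly the large-domain next-generation matrix of \citet{MAT09}, so $K_L\geq 0$, $\rho(K_L)=\brn(\mdpl)$, and $K_L v^\mdpl=\brn(\mdpl)\,v^\mdpl$.

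Next I would bring in \relem{unif-stab-dfe}. After shrinking the ball $\bo$ around $\mdplref$ and the radius $\eta$, every trajectory starting within $\eta$ of $\paren{\statiodmref,0,0,0}$ converges exponentially to the DFE and stays so close to it that, componentwise, $(1-\eps)\statiod\leq S^\mdpl(t)\leq(1+\eps)\statiod$ for all $t\geq 0$, uniformly over $\mdpl\in\bo$; here continuity of $\mdpl\mapsto\statiod$ together with positivity of $\statiodmref$ makes these bounds and $\eta$ uniform, and continuity of $\mdpl\mapsto\brn(\mdpl)$ ensures $(1\pm\eps)\brn(\mdpl)<1$ on $\bo$. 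The exponential decay makes $\hat y:=\int_0^\infty y$ and $Z:=\int_0^\infty F(t)y(t)\,dt$ finite; integrating $y'=(F-V)y$ and using $y(\infty)=0$ gives $V\hat y=y(0)+Z$, i.e.\ $\hat y=V^{-1}\paren{y(0)+Z}$. Since $I^\mdpl(t)\geq 0$ and $F^\star\geq 0$, the bounds on $S^\mdpl$ yield $(1-\eps)F^\star y(t)\leq F(t)y(t)\leq(1+\eps)F^\star y(t)$, so integrating and substituting $\hat y$ gives, componentwise,
\[
(1-\eps)\,K_L\paren{y(0)+Z}\ \leq\ Z\ \leq\ (1+\eps)\,K_L\paren{y(0)+Z}.
\]

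The final step uses the hypothesis $y(0)=\initconta^\mdpl(0)\,v^\mdpl$. Feeding $K_L v^\mdpl=\brn v^\mdpl$ into the right-hand inequality gives $Z\leq(1+\eps)\brn\,\initconta^\mdpl(0)\,v^\mdpl+(1+\eps)K_L Z$; iterating this (legitimate because $K_L\geq 0$ is order-preserving and $\rho\paren{(1+\eps)K_L}=(1+\eps)\brn<1$, so the tail $\paren{(1+\eps)K_L}^{N}Z\to 0$) yields $Z\leq\initconta^\mdpl(0)\,v^\mdpl\sum_{k\geq 1}\paren{(1+\eps)\brn}^k=\frac{(1+\eps)\brn}{1-(1+\eps)\brn}\,\initconta^\mdpl(0)\,v^\mdpl$, and the left-hand inequality yields symmetrically $Z\geq\frac{(1-\eps)\brn}{1-(1-\eps)\brn}\,\initconta^\mdpl(0)\,v^\mdpl$. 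Taking the inner product with $\mathbf{1}$ and using $\mathbf{1}^\top v^\mdpl=1$ turns these into scalar bounds on $\mathbf{1}^\top Z$. To close, summing the $S$-equation of \reeq{seir-metapop}, using $\mathbf{1}^\top\mdpl=0$ and conservation of total mass, gives $\sum_{\node\in\nods}R_\node^\mdpl(\infty)=\initconta^\mdpl(0)+\mathbf{1}^\top Z$ (in the onset regime $R^\mdpl(0)=0$); plugging in the two bounds and using $1+\frac{c\brn}{1-c\brn}=\frac{1}{1-c\brn}$ produces exactly \reeq{fsize}.

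The main obstacle is, I expect, not any single estimate but the uniformity: upgrading the qualitative fact ``the DFE is stable because $\brn(\mdplref)<1$'' to the quantitative, $\mdpl$-uniform sandwich $(1-\eps)\statiod\leq S^\mdpl(t)\leq(1+\eps)\statiod$ valid for \emph{all} $t\geq 0$ with a single $\eta$. This is precisely the role of \relem{unif-stab-dfe}; once it is available, the rest is a controlled sequence of nonnegative-matrix manipulations organised around the eigenrelation $K_L v^\mdpl=\brn(\mdpl)\,v^\mdpl$.
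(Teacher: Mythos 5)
Your proof is correct, and although it rests on the same two structural inputs as the paper's --- the uniform stability result \relem{unif-stab-dfe}, used to sandwich $S^\mdpl(t)$ between $(1-\eps)\statiodfmat{\mdpl}$ and $(1+\eps)\statiodfmat{\mdpl}$ for all time, and the Perron eigenrelation of the large-domain next-generation matrix applied to $\cpl{E^\mdpl(0)}{I^\mdpl(0)}=\initconta^\mdpl(0)\,v^\mdpl$ --- the technical middle is genuinely different. The paper sandwiches the whole time-dependent infected subsystem between two \emph{constant-coefficient} linear ODEs via the Metzler comparison theorem (Theorem B.1 of \citet{smith95}), integrates their matrix exponentials using $\int_0^\infty e^{As}\,ds=-A^{-1}$ (which forces a continuity-of-eigenvalues argument to make $(1\pm\eps)\mathbf{F}_\mdpl+\mathbf{V}_\mdpl$ Hurwitz uniformly on $\bo$), and then extracts $\diagdelta\int_0^\infty I^\mdpl$ through a projection matrix and an explicit block inversion of $\mathbf{V}_\mdpl^{-1}$. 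You instead integrate the \emph{exact} system, bound only the bilinear incidence term $\diag{S^\mdpl(t)}\diagbeta\, I^\mdpl(t)$ pointwise in time (needing only nonnegativity of solutions, not an ODE comparison theorem), reach the fixed-point inequalities $(1-\eps)K_L\paren{y(0)+Z}\leq Z\leq(1+\eps)K_L\paren{y(0)+Z}$, and resolve them by a Neumann series whose convergence is immediate from $(1+\eps)\brn(\mdpl)<1$; your conservation-of-mass endgame replaces the paper's block-inversion computation. At each of these three points your route is the more elementary one; what the paper's comparison-system route buys is pointwise-in-time control of $\cpl{E^\mdpl(t)}{I^\mdpl(t)}$ rather than only of its time integral, but that extra information is not needed for the final size. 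Two small caveats you share with the paper, so they are not gaps relative to it: both proofs pass from the norm bound of \relem{unif-stab-dfe} to the coordinatewise bound on $S^\mdpl(t)$ using the uniform positivity of $\statiodfmat{\mdpl}$ near $\statiodmref$ without spelling it out, and both implicitly take $R^\mdpl(0)=0$ (the paper when writing $\sum_{\node\in\nods}R^\mdpl_\node(\infty)=\sum_{\node\in\nods}\delta_\node\int_0^\infty I^\mdpl_\node$, you in the conservation step).
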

This result shows that, for diffusions $\mdpl$ close enough to $\mdplref$, the final size is controlled by monotonic functions of the basic reproduction numbers $\brn(\mdpl)$'s: the smaller the $\brn(\mdpl)$'s, the closer the final size gets to the initial number of infected people, $\initconta^\mdpl(0)$, meaning the epidemic has not spread widely within the population.
Note that the proof could extend to other compartmental models, like SIR or SEPIR (see the supplementary materials for a description of the latter).

Our relation has two main limits. First, it only applies for $\brn(\mdplref)<1$, that is when the DFE is stable, but even in this case, the final size may be deemed unacceptably high, so that studying if it can be lowered remains relevant.
Then, we restrict to initial conditions of the form $\mathfrak{I}^\mdpl (0) v^\mdpl$. However, by doing so, we only constrain their orientation, but not the absolute number of individuals $\initconta^\mdpl (0)$, which can be any sufficiently small value.

We have therefore addressed (albeit partially) the question of the extent to which the basic reproduction number influences the final size. Another question is the extent to which redirecting the flows modifies the basic reproduction number. We do not address theoretically it here, but the subsequent sections show experimentally that by redirecting the flows, we manage to reduce the basic reproduction number, and the corresponding final size. See for instance \citegao \citet{gao2020dispersal} for a work studying the influence of the diffusion on the basic reproduction number. 
Before proceeding further, we first prove \reprop{fsize-r0-smaller-one}.

\subsection{Proof of the Final Size Relation, \reprop{fsize-r0-smaller-one}}
\label{sec:pfsize-relation}

In the proof, we use \relem{unif-stab-dfe}, which shows the DFE is uniformly stable at the vicinity of $\mdplref$, and which is stated and proved in \reapp{pstab-dfe}.
\paragraph{Comparison with \gu{constant-matrices}}
Let $\eps >0$. Thanks to \relem{unif-stab-dfe}, we can find a ball $\bo$ around $\mdplref$, and $\eta > 0$ such that, for every initial condition $\big(S(0), E(0), I(0), R(0)\big)$ satisfying 
$\nrm{\big(S(0), E(0), I(0), R(0)\big) - \big(\statiodmref, 0, 0, 0\big)} < \eta$,
for any diffusion matrix $\mdpl$ in $\bo$, for all $t\geq 0$, we have
$\nrm{\paren{S^\mdpl(t), E^\mdpl(t), I^\mdpl(t), R^\mdpl(t)} - \paren{\statiodfmat{\mdpl}, 0, 0, 0}} < \eps$.
Let $\mdpl\in\bo$. As a consequence, for all $t\geq 0$, we have, coordinate-wise,
\begin{align*}
&\begin{pmatrix}
\mdpl-\diaggamma & \diagbeta \mathrm{diag}\paren{\Tilde{\mu}_\mdpl \paren{1-\eps}} \\
\diaggamma & \mdpl-\diagdelta
\end{pmatrix} \\
\leq
&\begin{pmatrix}
\mdpl-\diaggamma & \diagbeta \mathrm{diag}\paren{S^\mdpl(t)} \\
\diaggamma & \mdpl-\diagdelta
\end{pmatrix} \\
\leq
&\begin{pmatrix}
\mdpl-\diaggamma & \diagbeta \mathrm{diag}\paren{\Tilde{\mu}_\mdpl \paren{1+\eps}} \\
\diaggamma & \mdpl-\diagdelta
\end{pmatrix}.
\end{align*}
Let us define
\begin{equation*}
\ba
\mathbf{F}_\mdpl=
\begin{pmatrix}
0 & \diagbeta \, \mathrm{diag}\paren{\Tilde{\mu}_\mdpl} \\
0 & 0
\end{pmatrix} 
\text{, and} \quad 
\mathbf{V}_\mdpl = \begin{pmatrix}
\mdpl -\diaggamma & 0 \\
\diaggamma & \mdpl - \diagdelta
\end{pmatrix}.
\ea
\end{equation*}
To proceed by comparison, we consider the following two dynamics:
\begin{equation*}
\left\lbrace \ba
\cpl{\frac{d E_-^\mdpl}{dt}}{\frac{dI_-^\mdpl}{dt}} &=
\paren{\paren{1-\eps}\mathbf{F}_\mdpl + \mathbf{V}_\mdpl}
\cpl{E_-^\mdpl(t)}{I_-^\mdpl(t)}\\
\cpl{\frac{d E_+^\mdpl}{dt}}{\frac{dI_+^\mdpl}{dt}} &=
\paren{\paren{1+\eps}\mathbf{F}_\mdpl + \mathbf{\mathbf{V}}_\mdpl}
\cpl{E_+^\mdpl(t)}{I_+^\mdpl(t)},
\ea \right.
\end{equation*}
both starting at $\cpl{E^\mdpl(0)}{I^\mdpl(0)}$. In the following, we consider fixed a vector $v^\mdpl$ such that $\cpl{E^\mdpl(0)}{I^\mdpl(0)}$ is directed by $v^\mdpl$.
Now, $\paren{1-\eps}\mathbf{F}_\mdpl+\mathbf{V}_\mdpl$ and $\paren{1+\eps}\mathbf{F}_\mdpl+\mathbf{V}_\mdpl$ are both Metzler matrices, since $\mathbf{V}_\mdpl$ is non-negative off diagonal and $\mathbf{F}_\mdpl$ has non-negative coefficients. As a result, we may use the comparison Theorem B.1 of \citet{smith95}, and we obtain that, for all $t\geq 0$,
$\cpl{E_-^\mdpl(t)}{I_-^\mdpl(t)}
\leq
\cpl{E^\mdpl(t)}{I^\mdpl(t)}
\leq
\cpl{E_+^\mdpl(t)}{I_+^\mdpl(t)}$.
Now, the matrices in the bounding systems are constant, so we can express their solutions with the matrix exponential and, as a result, for all $t\geq 0$, we have
\begin{align*}
\exp \big(((1-\eps) & \mathbf{F}_\mdpl + \mathbf{V}_\mdpl) t\big) \cpl{E^\mdpl(0)}{I^\mdpl(0)}\\
\leq
&\cpl{E^\mdpl(t)}{I^\mdpl(t)} \\
\leq
&\exp \big(((1+\eps)\mathbf{F}_\mdpl + \mathbf{V}_\mdpl) t\big)
\cpl{E^\mdpl(0)}{I^\mdpl(0)}.
\end{align*}
\paragraph{Integrating the comparison}
Now, for every matrix $A$ which eigenvalues all have a negative real part, we know that
$\int_0^{\infty} \exp \paren{As}ds = -A^{-1}$.
Since $\brn(\mdplref)<1$, all the eigenvalues of $F_{\mdplref}+V_{\mdplref}$ have real parts (strictly) less than $1$. Now, the eigenvalues of a matrix depend continuously on the matrix. Upon diminishing the ball $\bo$ around $\mdplref$, we may therefore assume that, for all diffusion matrix $\mdpl$ in $\bo$, all the eigenvalues of $\mathbf{F}_\mdpl+\mathbf{V}_\mdpl$ have real parts - strictly - less than $1$. Upon diminishing $\eps$, we may finally assume that, for all diffusion matrix $\mdpl\in\bo$, the eigenvalues of $\paren{1-\eps}\mathbf{F}_\mdpl+\mathbf{V}_\mdpl$ and $\paren{1+\eps}\mathbf{F}_\mdpl+\mathbf{V}_\mdpl$ have - strictly - negative real parts. 
Thus, writing $\mathbf{I}=\int_0^{\infty}
\cpl{E^\mdpl(s)}{I^\mdpl(s)}\, ds$, we have
\begin{multline*}
 -\paren{\paren{1-\eps}\mathbf{F}_\mdpl + \mathbf{V}_\mdpl}^{-1}
\cpl{E^\mdpl(0)}{I^\mdpl(0)}
\leq \mathbf{I} \\
\leq
 -\paren{\paren{1+\eps}\mathbf{F}_\mdpl + \mathbf{V}_\mdpl}^{-1}
\cpl{E^\mdpl(0)}{I^\mdpl(0)}.
\end{multline*}
\paragraph{Using the next-generation matrix}
Let us now multiply this inequality by a square non-negative matrix $\mathbf{Z}$, to be fixed below. We therefore have 
\begin{multline}
\label{eq:ineq_with_Z}
 -\mathbf{Z}\paren{\paren{1-\eps}\mathbf{F}_\mdpl + \mathbf{V}_\mdpl}^{-1}
\cpl{E^\mdpl(0)}{I^\mdpl(0)}
\leq  \mathbf{Z} \mathbf{I} \\
\leq
 -\mathbf{Z}\paren{\paren{1+\eps}\mathbf{F}_\mdpl + \mathbf{V}_\mdpl}^{-1}
\cpl{E^\mdpl(0)}{I^\mdpl(0)}.
\end{multline}
Using the invertibility of $\mathbf{V}_\mdpl$, we know that
$- \paren{\paren{1-\eps} \mathbf{F}_\mdpl + \mathbf{V}_\mdpl}
=\paren{-\paren{1-\eps}\mathbf{F}_\mdpl \mathbf{V}_\mdpl^{-1} - \id}\mathbf{V}_\mdpl$.
Since the next-generation matrix with large domain \cite{MAT09}, $\mathbf{K}_\mdpl = -\mathbf{F}_\mdpl \mathbf{V}_\mdpl^{-1}$ is non-negative, we can find a right eigenvector $v^\mdpl\neq 0$ associated to the spectral radius $\brn=\brn(\mdpl)=\rho\paren{\mathbf{K}_\mdpl} = \rho\paren{\mathbf{G}_\mdpl}$. In what follows, we fix one such $v = v^\mdpl$, and drop the explicit dependencies on $\mdpl$ for $v^\mdpl$ and $\brn(\mdpl)$ so as to simplify the notations. Hence, we obtain
$- \paren{\paren{1-\eps} \mathbf{F}_\mdpl + \mathbf{V}_\mdpl}\mathbf{V}_\mdpl^{-1}v
=\paren{1-\eps}\brn v - v$.

Since $\brn(\mdplref)<1$, and since the spectral radius of a matrix depends continuously on the matrix, upon reducing $\bo$ further, we may assume that, for all diffusion matrix $\tilde{\mdpl}\in\bo$, we have
$\paren{1+\eps}\brn(\tilde{\mdpl}) < 1$,     
so that both $1-\paren{1+\eps}\brn(\tilde{\mdpl})$ and, \emph{a fortiori}, $1-\paren{1-\eps}\brn(\tilde{\mdpl})$ do not vanish. 

By assumption, $\cpl{E^\mdpl(0)}{I^\mdpl(0)}$ is directed by $v$, 
so that it is also a right eigenvector of $\mathbf{K}_\mdpl$ associated to $\brn(\mdpl)$.
Hence, we successively have 
\begin{multline*}
- \paren{\paren{1-\eps} \mathbf{F}_\mdpl + \mathbf{V}_\mdpl}\mathbf{V}_\mdpl^{-1} \cpl{E^\mdpl(0) }{I^\mdpl(0)}\\
=\paren{\paren{1-\eps}\brn - 1}
\cpl{E^\mdpl(0)}{I^\mdpl(0)}
\end{multline*}
then, multiplying by $\paren{\paren{1-\eps}\mathbf{F}_\mdpl+\mathbf{V}_\mdpl}^{-1}$, we obtain
\begin{multline*}
- \mathbf{V}_\mdpl^{-1}
\cpl{E^\mdpl(0)}{I^\mdpl(0)}\\
=\paren{\paren{1-\eps}\brn - 1}
\paren{\paren{1-\eps} \mathbf{F}_\mdpl + \mathbf{V}_\mdpl}^{-1}
\cpl{E^\mdpl(0)}{I^\mdpl(0)}.
\end{multline*}
Finally, dividing by $1-\paren{1-\eps}\brn$ - which is not zero thanks to the above - and multiplying by the matrix $\mathbf{Z}$, we get
\begin{multline*}
\paren{\paren{1-\eps}\brn-1}^{-1} \mathbf{Z} \mathbf{V}_\mdpl^{-1} \cpl{E^\mdpl(0)}{I^\mdpl(0)}\\
= -\mathbf{Z} \paren{\paren{1-\eps} \mathbf{F}_\mdpl + \mathbf{V}_\mdpl}^{-1}
\cpl{E^\mdpl(0)}{I^\mdpl(0)}.
\end{multline*}
The same reasoning applies with the matrix $\paren{1+\eps}\mathbf{F}_\mdpl + \mathbf{V}_\mdpl$ in the upper-bounding system.
Plugging it all back into \reeq{ineq_with_Z}, we obtain
\begin{multline*}
-\paren{1-\paren{1-\eps}\brn}^{-1} \mathbf{Z} \mathbf{V}_\mdpl^{-1}
\cpl{E^\mdpl(0)}{I^\mdpl(0)}
\leq \mathbf{Z} \mathbf{I} \\
\leq
-\paren{1-\paren{1+\eps}\brn}^{-1} \mathbf{Z} \mathbf{V}_\mdpl^{-1}
\cpl{E^\mdpl(0)}{I^\mdpl(0)}.
\end{multline*}
Now, set 
$\mathbf{Z} = 
\begin{pmatrix}
0 & 0 \\
0 & \diagdelta
\end{pmatrix}$
and, denoting $\mathbf{\Lambda}_{(i:j)}$ the submatrix of $\mathbf{\Lambda}$ obtained by extracting the $i^{\text{\tiny th}}$ to $j^{\text{\tiny th}}$ rows of $\mathbf{\Lambda}$,
set
$\cpl{\mathbf{B}_1}{\mathbf{B}_2}=\paren{\mathbf{V}_\mdpl^{-1}}_{\paren{\abs{\nods}+1:2\abs{\nods}}}$,
a $|\nods| \times 2|\nods|$ matrix.
Restraining to the rows ${\paren{\abs{\nods}+1:2\abs{\nods}}}$ corresponds to the infected coordinates - that of the vector $I^\mdpl$. Thus, we obtain
\begin{multline}
\label{eq:bound_integralI}
\frac{- \cpl{\diagdelta \mathbf{B}_1}{\diagdelta \mathbf{B}_2}}{1-\paren{1-\varepsilon}\brn} 
\cpl{E^\mdpl(0)}{I^\mdpl(0)}
\leq \diagdelta \int_0^{\infty}I^\mdpl(s) ds \\
\leq 
\frac{- \cpl{\diagdelta \mathbf{B}_1}{\diagdelta \mathbf{B}_2}}{1-\paren{1+\eps}\brn}  
\cpl{E^\mdpl(0)}{I^\mdpl(0)}.
\end{multline}
\paragraph{Conclusion}
Remember we want to bound
$\Delta^\mdpl = \sum_{\node\in\nods}R_\node(\infty) = 
\sum_{\node\in\nods} \delta_\node \int_0^\infty I^\mdpl_\node(s) ds
= \int_0^\infty \boldsymbol{e}^T \diagdelta I^\mdpl(s) ds$,
where $\boldsymbol{e} = \paren{1, \dots ,1}$ is the vector with all entries equal to unity. 
Multiplying by $\boldsymbol{e}$ every term of \reeq{bound_integralI}, using an explicit expression of $\mathbf{V}_\mdpl^{-1}$ - inversion of a $2\times2$ block matrix - and expanding the terms $\boldsymbol{e}^T \cpl{\diagdelta \mathbf{B}_1}{\diagdelta \mathbf{B}_2} \cpl{E^\mdpl(0)}{I^\mdpl(0)}$, we obtain
\begin{multline*}
\paren{1 - \paren{1+\eps}\brn} \Delta^\mdpl \leq \\
\boldsymbol{e}^T\paren{\diagdelta \paren{\mdpl - \diagdelta}^{-1} \diaggamma \paren{\mdpl - \diaggamma}^{-1} E^\mdpl(0) - \diagdelta \paren{\mdpl - \diagdelta}^{-1} I^\mdpl(0)} \\
\leq \paren{1 - \paren{1-\eps}\brn} \Delta^\mdpl.
\end{multline*}
Using the fact $\diagdelta \paren{\mdpl-\diagdelta}^{-1}=-\id + \mdpl\paren{\mdpl-\diagdelta}^{-1}$,
and the analogous relation for $\diaggamma\paren{\mdpl-\diaggamma}^{-1}$,
we may rewrite the term between parentheses as
\begin{multline*}
 \paren{\mdpl \paren{\mdpl - \diagdelta}^{-1} - \id } \paren{\mdpl \paren{\mdpl - \diaggamma}^{-1} - \id } E^\mdpl(0) \\
 + \paren{\id - \mdpl\paren{\mdpl-\diagdelta}^{-1}} I^\mdpl(0).
\end{multline*}
Since $\mdpl$ is a diffusion matrix, we know that the coordinates of the vectors $\mdpl \paren{\mdpl - \diagdelta}^{-1} E^\mdpl(0)$, $\mdpl \paren{\mdpl - \diaggamma}^{-1} E^\mdpl(0)$, $\mdpl \paren{\mdpl - \diagdelta}^{-1}\mdpl \paren{\mdpl - \diaggamma}^{-1} E^\mdpl(0)$ and $\mdpl \paren{\mdpl - \diagdelta}^{-1} I^\mdpl(0)$ sum to zero. Hence, we finally obtain 
\begin{multline*}
\paren{1 - \paren{1-\eps}\brn}^{-1}{\boldsymbol{e}^T}\paren{E^\mdpl(0) + I^\mdpl(0)} \leq \Delta^\mdpl \\
\leq \paren{1 - \paren{1+\eps}\brn}^{-1} {\boldsymbol{e}^T} \paren{E^\mdpl(0) + I^\mdpl(0)}.
\end{multline*}
Expressing the scalar products with the vector $\boldsymbol{e}$ as a sum, and remembering $\brn=\brn(\mdpl)$, we obtain the expression given in the statement of the lemma.

\section{Flows Optimisation}
\label{sec:optim_flows}
We first define how we parameterise the diffusion (\resec{param_diff}), before defining the losses we optimise (\resec{losses_control_pol}). Then, we justify the losses are differentiable (\resec{diff_losses}). FInally, we discuss the algorithmic complexity of their optimisation (\resec{algo_complex}).

\subsection{Parameterising the Diffusion}
\label{sec:param_diff}
We aim at redirecting the flows in order to control the epidemic. To this avail, we let fixed the outrates in each node, that is the rate at which the individuals leave the nodes, and only modify the way they are dispatched along the several edges leaving the nodes, according to a policy. Therefore, we consider diffusion matrices of the form
\begin{equation*}
\mdpl(\prm) = \vecorate \, \big(\pol(\prm) - \id_{|\nods|}\big)^T,
\end{equation*}
where the outrate diagonal matrix $\vecorate$, and the policy $\pol(\prm)$, both in $\mathcal{M}_{\abs{\nods}}(\mathbb{R})$, are defined below.

For every node $\node \in \nods$, we call outrate in $\node$ a positive real number $\orate_\node$, and write $\vecorate = \mathrm{diag}(f_1,\ldots, f_{|\nods|})$. Let us define the parameter space
\begin{equation*}
\eprm = \mathbb{R}^{\abs{\edgs}}.
\end{equation*}
We index $\prm\in\eprm$ by the edges in $\edgs$: for instance, if $\node, i \in\nods$ are such that $\node\leadsto i$ belongs to $\edgs$, we write $\prm_{\node, i}$ the corresponding entry of $\prm$.
We use as control variable a policy $\pol(\prm)$ over $\nods$, parameterised with a parameter $\prm\in\eprm$, 
such that, for every node $\node$, $i\mapsto \pol_{\node,i}(\prm)$ is a softmax function over the neighbours of $\node$. Namely, for every edge $\node\leadsto i \in \edgs$, we impose
$
\pol_{\node, i}(\prm) = {e^{\prm_{\node, i}}}/{\sum_{j, \node\leadsto j \in \edgs} e^{\prm_{\node, j}}}
$, while for every node $i$ such that there is no edge $\node\leadsto i$, we impose $\pol_{\node,i}(\prm)=0$. 
As a result, for every $\prm$, $\pol(\prm)$ is indeed a policy, and the mapping $\prm \mapsto \pol(\prm)$ is regular. For every $\prm \in \eprm$, for every $\node\leadsto i\in\edgs$, the quantity $\pol_{\node, i}(\prm)$ is the proportion of individuals who leave the node $\node$ through the edge $\node\leadsto i$.

Let us finally check the diffusion matrices $\mdpl(\prm)$ are indeed diffusion matrices in the sense of \redef{diff_mat}. Let us fix $\prm$.
For every node $\node$, $i\mapsto \pol_{\node,i}(\prm)$ is a softmax function over the neighbours of $\node$ therefore, for every edge $\node\leadsto i\in\edgs$, $\pol_{\node,i}(\prm)$ is positive. Now, since $\mathcal{G}$ is strongly connected, we know $\pol(\prm)$ is irreducible, therefore $\mdpl(\prm)$ is as well, as $\vecorate$ is positive. Finally, the columns of $\mdpl(\prm)$ sum to $0$, as $\pol(\prm)$ is stochastic. This concludes our argument.

In what follows, we fix $\vecorate$.
Therefore, all quantities related to the diffusion are functions of $\prm$, and we write accordingly $\ngm(\prm)$, $\brn(\prm)$ and $\statiodprm$ the associated next-generation matrix, basic reproduction number, and stationary distribution, respectively.

\subsection{Losses, and Related Control Policies}
\label{sec:losses_control_pol}
We want to optimise the flows to control the epidemic: to this avail, we now introduce three losses on $\prm$. For each loss, the optimal parameter $\prmopt$ we obtain defines a control policy $\pol(\prmopt)$. The performances of these policies for epidemic control are evaluated in \resec{experiments}.

\textbf{Epidemic loss.} The main loss is the epidemic loss, defined by
\begin{equation*}
\epidaccr(\prm) = \brn(\prm).
\end{equation*}
The associated policy, $\pol(\prmopt)$, with $\prmopt \in \argmin_\prm \epidloss(\prm)$, is called the epidemic policy. It aims at stabilising the DFE, taking $\brn(\prm)$ below $1$, and reducing the final size: see \resec{brn_fsize}.

\textbf{No diffusion loss.} The second loss, or \alphalaccr for \gu{No Diffusion Loss}, is defined by, for every $\prm$,
\begin{equation*}
\alphaloss (\prm) = \mathcal{S}_{a} \biggl( \diagbeta\diagdelta^{-1}\statiodprm \biggl),    
\end{equation*}
where $\mathcal{S}_{a}: \mathbb{R}^{|\mathcal{N}|} \to \mathbb{R}$, is the $a$-smooth max function defined, for $a > 0$ and $u = (u_1,\ldots , u_{|\mathcal{N}|})$, by
\begin{equation*}
\mathcal{S}_{a}(u) = \frac{\sum_{i=1}^{|\mathcal{N}|} u_i\exp (a u_i)}{\sum_{i=1}^{|\mathcal{N}|}\exp (a u_i)}. 
\end{equation*}
It is a smoothed maximum of the individual basic reproduction numbers $\beta_\node \delta_\node^{-1} \statiodprm(\node)$ of each node $\node$, computed when there is no diffusion. The maximum of these reproduction numbers is therefore the limit, when $\tau \to \infty$, of the basic reproduction number of the system of \reeq{seir-metapop}, when the diffusion is replaced by $\mdpl/\tau$. Here, $\tau$ acts as the typical time at which diffusion occurs.

\textbf{Quick diffusion loss.} Conversely, the third loss, \rapsumaccr for \gu{Quick Diffusion Loss}, is the limit, when $\tau\to 0$, of the basic reproduction number (see \citebeaufort \citet{beaufort2021network} for a proof in the special case of a SIR reaction, but the same reasoning applies to SEIR). It is defined by, for every parameter $\prm$,  
\begin{equation*}
\rapsumloss (\prm) = \frac{\sum_{\node\in\nods} \beta_\node \statiodprm(\node)^2}{\sum_{\node\in\nods} \delta_\node \statiodprm(\node)}.
\end{equation*}

All three losses aim at first sight at redistributing the population on the network, by sending it to nodes with low $\beta_\node$ coefficient, and high $\delta_\node$ coefficient. This is the somewhat obvious strategy. Indeed, when $\beta_\node$ is small, and $\delta_\node$ is high, the epidemic is less severe. However, such a strategy, though enticing, is flawed. Indeed, increasing the number of individuals in a node increases its individual reproduction number $\beta_\node\delta_\node^{-1} \statiodprm(\node)$, so that putting all the population in the most favourable node will often not provide a good solution. Therefore, our three losses try to balance this objective with the negative effects increasing the population of nodes have. Moreover, through its direct dependency on $\mdpl(\prm)$, \epidaccr also takes into account the transfers of population which happen between the nodes during the epidemic. 

The policies obtained by minimising the three losses are called respectively \epidpolaccr, \alphapolaccr and \rapsumpolaccr.

\subsection{Differentiability of the Losses}
\label{sec:diff_losses}

The optimisation of the losses we use gives nonlinear, and not symmetrical, optimisation problems, therefore we solve them by direct gradient descent (see \resec{exp_setup}). 
This is possible because the losses we use are differentiable with respect to $\prm$.
\begin{proposition}[Differentiability of the Losses]
\label{prop:diff_losses}
\epidaccr, \alphalaccr and \rapsumaccr are differentiable with respect to $\prm$ on $\eprm$.
\end{proposition}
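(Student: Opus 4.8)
The plan is to establish differentiability compartment by compartment, tracing the dependence on $\prm$ through the composition of elementary maps. The starting point is that the parameterisation $\prm \mapsto \mdpl(\prm) = \vecorate\,(\pol(\prm) - \id_{|\nods|})^T$ is smooth: each entry $\pol_{\node, i}(\prm)$ is either identically zero or a softmax ratio of exponentials, hence $C^\infty$ in $\prm$, and $\vecorate$ is fixed, so $\prm \mapsto \mdpl(\prm)$ is $C^\infty$ as a map into $\mathcal{M}_{|\nods|}(\mathbb{R})$. It then suffices to show that each loss is a differentiable function of $\mdpl$ (on the relevant open set of diffusion matrices) and conclude by the chain rule.

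For \alphalaccr and \rapsumaccr, the only nontrivial ingredient is the differentiability of the stationary distribution $\mdpl \mapsto \statiod$. Here I would invoke Perron--Frobenius theory together with the implicit function theorem: $\statiod$ is the unique positive right eigenvector of the Metzler, irreducible matrix $\mdpl$ normalised by $\boldsymbol{e}^T\statiod = 1$, associated with the simple eigenvalue $0$; simplicity of this eigenvalue (again Perron--Frobenius) guarantees that the map $(\mdpl, \mu) \mapsto (\mdpl\mu, \boldsymbol{e}^T\mu - 1)$ has invertible partial differential in $\mu$ at a solution, so the implicit function theorem yields that $\mdpl \mapsto \statiod$ is smooth. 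Once $\prm \mapsto \statiodprm$ is known to be smooth, \rapsumloss$(\prm)$ is a ratio of polynomials in the coordinates of $\statiodprm$ with strictly positive denominator (all $\delta_\node > 0$ and $\statiodprm$ has positive entries), hence differentiable; and \alphalaccr$(\prm) = \mathcal{S}_a(\diagbeta\diagdelta^{-1}\statiodprm)$ is the composition of the smooth map $\mathcal{S}_a$ (a ratio of sums of exponentials with positive denominator) with a linear map and $\statiodprm$, hence differentiable.

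For \epidaccr$(\prm) = \brn(\prm) = \rho(\ngm_{\mdpl(\prm)})$, the argument is the one already flagged in Section~\ref{sec:metapop_models}: the next-generation matrix $\ngm_\mdpl = \diagbeta\,\diag{\statiod}\,(\mdpl - \diagdelta)^{-1}\,\diaggamma\,(\mdpl - \diaggamma)^{-1}$ depends smoothly on $\mdpl$ (matrix inversion is smooth on invertible matrices, and one checks $\mdpl - \diagdelta$ and $\mdpl - \diaggamma$ are invertible since $\mdpl$ is Metzler with zero column sums and $\diagdelta, \diaggamma$ are positive diagonal, so these matrices have spectra in the open left half-plane; composed with the smooth $\mdpl \mapsto \statiod$ above). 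Moreover $\ngm_\mdpl$ is a non-negative matrix, and, by the same Perron--Frobenius argument as for $\statiod$, its spectral radius $\brn$ is a simple eigenvalue, isolated from the rest of the spectrum; a simple eigenvalue of a matrix is a real-analytic (in particular differentiable) function of the matrix entries (standard perturbation theory, e.g.\ via the implicit function theorem applied to the characteristic polynomial, or Kato's formula). Composing with $\prm \mapsto \mdpl(\prm)$ gives differentiability of \epidaccr, together with an analytic expression for the derivative via the left and right Perron eigenvectors.

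The main obstacle is the irreducibility/non-negativity bookkeeping needed to invoke Perron--Frobenius cleanly: one must verify that $\ngm_{\mdpl(\prm)}$ stays non-negative (and ideally that its Perron eigenvalue stays simple) for all $\prm \in \eprm$, which requires knowing $(\mdpl - \diagdelta)^{-1}$ and $(\mdpl - \diaggamma)^{-1}$ are non-negative matrices — this follows because $\diagdelta - \mdpl$ and $\diaggamma - \mdpl$ are non-singular M-matrices (Metzler with spectrum in the open left half-plane, equivalently strictly diagonally dominant after a diagonal scaling), whose inverses are entrywise non-negative. Assembling these facts carefully, rather than any single hard computation, is where the real work lies; the differentiability statements themselves are then immediate from the chain rule.
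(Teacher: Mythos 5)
Your proposal follows essentially the same route as the paper: smoothness of $\prm\mapsto\mdpl(\prm)$ from the softmax parameterisation, differentiability of the stationary distribution (the paper reduces to the stochastic matrix $\id+\delta t\,\mdpl(\prm)^T$ and cites a standard reference, where you invoke the implicit function theorem at the simple eigenvalue $0$ --- both work), smoothness of $\prm\mapsto\ngm(\prm)$ via smoothness of matrix inversion, and finally differentiability of the spectral radius as a simple Perron eigenvalue of a non-negative irreducible matrix (the paper cites Caswell's result, which is exactly the simple-eigenvalue perturbation fact you use). The treatment of \alphalaccr and \rapsumaccr as smooth functions of $\statiodprm$ with positive denominators is also as in the paper.

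One link in your chain is under-claimed. For \epidaccr you establish only that $-\paren{\mdpl-\diagdelta}^{-1}$ and $-\paren{\mdpl-\diaggamma}^{-1}$ are entrywise \emph{non-negative}, hence that $\ngm_{\mdpl(\prm)}$ is non-negative. Non-negativity alone does not make the Perron root simple (the identity matrix is non-negative and its spectral radius has full multiplicity), so the perturbation theory for simple eigenvalues cannot yet be invoked; you flag this yourself (``ideally that its Perron eigenvalue stays simple'') but do not close it. The fix is immediate: since $\mathcal{G}$ is strongly connected, $\mdpl(\prm)$ is irreducible, so $\diagdelta-\mdpl$ and $\diaggamma-\mdpl$ are \emph{irreducible} non-singular M-matrices, whose inverses are entrywise strictly positive (this is the content of the lemma of Arino et al.\ that the paper invokes). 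Combined with the positivity of $\diagbeta$, $\diaggamma$ and $\statiodprm$, this makes $\ngm_{\mdpl(\prm)}$ entrywise positive, hence irreducible, hence its Perron root is simple, and your argument goes through. With that one strengthening your proof is complete and matches the paper's.
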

Since all three losses depend on a smooth way on the basic reproduction number, and the stationary distribution, we only need to prove these are differentiable, which we do in \relem{diff_statiod_brn}. The fact the basic reproduction number is differentiable stems from its expression as the spectral radius of the next-generation matrix: it is also the spectral radius of the next-generation matrix with large domain \cite{MAT09}, but we cannot deduce its differentiability from it, as this latter matrix is not irreducible.
\begin{lemma}[Differentiability of Relevant Quantities]
\label{lem:diff_statiod_brn}
The map $\prm\mapsto \statiodprm$ is differentiable.
Moreover, the map $\prm\mapsto\brn\paren{\prm}$ is differentiable and, for every $\prm\in\eprm$, its euclidean gradient is given by 
\begin{equation*}
\frac{\partial \brn}{\partial \prm}\paren{\prm} = \paren{\ps{r(\prm)\,l(\prm)}{\frac{\partial }{\partial \prm_{i,j}}\ngm(\prm)}}_{1\leq i,j \leq |\mathcal{N}|},
\end{equation*}
where $\ps{\cdot}{\cdot}$ stands for the dot product on the space of matrices, and 
$l(\prm)$ and $r(\prm)$ are, respectively, a left and a right Perron-eigenvectors, (associated to the eigenvalue $\brn(\prm)$) of the next-generation matrix $\ngm(\prm)$, such that $l(\prm)r(\prm)=1$.
\end{lemma}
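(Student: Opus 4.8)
The plan is to reduce the lemma to two ingredients: smoothness of softmaxes, of matrix inversion and of finite matrix products; and the classical first–order perturbation theory of a \emph{simple} eigenvalue, which together with its suitably normalized left and right eigenvectors depends analytically on the matrix (see e.g.\ \citet{meyer2000}). The only step requiring genuine care is to check that the eigenvalues we differentiate are simple — $0$ for the diffusion matrix $\mdpl(\prm)$, and $\brn(\prm)$ for the next–generation matrix $\ngm(\prm)$ — which for $\mdpl(\prm)$ is immediate from \redef{diff_mat} and Perron–Frobenius, but for $\ngm(\prm)$ hinges on first showing $\ngm(\prm)$ is entrywise positive, via the M–matrix sign structure of $\mdpl(\prm)-\diagdelta$ and $\mdpl(\prm)-\diaggamma$. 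It is exactly irreducibility of $\ngm(\prm)$, as opposed to the large–domain next–generation matrix, that we exploit.

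\textbf{Stationary distribution.} First I would note that $\prm\mapsto\mdpl(\prm)=\vecorate\paren{\pol(\prm)-\id_{\abs{\nods}}}^{T}$ is $C^{\infty}$ on $\eprm$, since each entry $\pol_{\node,i}(\prm)$ is a softmax, hence smooth. For every $\prm$, $\mdpl(\prm)$ is an irreducible Metzler matrix whose columns sum to $0$, so by Perron–Frobenius its spectral abscissa is $0$, attained only at the simple eigenvalue $0$, with positive right eigenvector $\statiodprm$ normalized by $\boldsymbol{e}^{T}\statiodprm=1$. I would then obtain differentiability of $\prm\mapsto\statiodprm$ from the implicit function theorem applied to the system made of the first $\abs{\nods}-1$ scalar equations of $\mdpl(\prm)x=0$ together with $\boldsymbol{e}^{T}x=1$: at the solution $x=\statiodprm$, the partial Jacobian in $x$ is invertible, because any vector in its kernel lies in $\ker\mdpl(\prm)=\mathrm{span}\paren{\statiodprm}$ (the dropped equation being redundant since the columns sum to $0$) and is orthogonal to $\boldsymbol{e}$, hence vanishes. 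This yields local, hence global, smoothness; equivalently, one may invoke analyticity of the eigenprojection onto the simple eigenvalue $0$.

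\textbf{Differentiability of $\brn$ and the gradient.} Next, $\mdpl(\prm)-\diagdelta$ and $\mdpl(\prm)-\diaggamma$ are irreducible Metzler and satisfy $\boldsymbol{e}^{T}\paren{\mdpl(\prm)-\diagdelta}<0$ and $\boldsymbol{e}^{T}\paren{\mdpl(\prm)-\diaggamma}<0$ componentwise, so by standard M–matrix theory they have negative spectral abscissa, are invertible, and $-\paren{\mdpl(\prm)-\diagdelta}^{-1}$, $-\paren{\mdpl(\prm)-\diaggamma}^{-1}$ are entrywise positive. Since inversion is smooth on invertible matrices and $\statiodprm$ is smooth, $\ngm(\prm)=\diagbeta\,\diag{\statiodprm}\,\paren{\mdpl(\prm)-\diagdelta}^{-1}\,\diaggamma\,\paren{\mdpl(\prm)-\diaggamma}^{-1}$ is a smooth matrix–valued map of $\prm$; and, combining the sign information, it is a product of positive diagonal and entrywise positive matrices, hence entrywise positive, hence irreducible (indeed primitive). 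By Perron–Frobenius, $\brn(\prm)=\rho\paren{\ngm(\prm)}$ is then a simple eigenvalue, with positive left and right eigenvectors $l(\prm),r(\prm)$, which I would normalize so that $l(\prm)r(\prm)=1$. Simplicity makes $\rho$ coincide, near $\ngm(\prm)$, with an analytic function of the matrix, of differential $H\mapsto l(\prm)\,H\,r(\prm)$ (differentiate $Ar=\rho r$ and left–multiply by $l$); composing with the smooth map $\prm\mapsto\ngm(\prm)$ and using the chain rule gives differentiability of $\brn$ and, for every edge $i\leadsto j\in\edgs$,
\[
\frac{\partial\brn}{\partial\prm_{i,j}}\paren{\prm}=l(\prm)\,\frac{\partial\ngm}{\partial\prm_{i,j}}\paren{\prm}\,r(\prm)=\ps{r(\prm)\,l(\prm)}{\frac{\partial\ngm}{\partial\prm_{i,j}}\paren{\prm}},
\]
the last equality being the identity $l\,H\,r=\ps{r\,l}{H}$ for the rank–one matrix $r\,l$ and the chosen matrix pairing. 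The entrywise derivatives $\partial\ngm/\partial\prm_{i,j}$ follow in turn from the product and inverse rules applied to the derivatives of $\mdpl(\prm)$ and $\statiodprm$ established above, but need not be made explicit for the statement.

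\textbf{Where the difficulty lies.} The calculus is routine; the load–bearing point is that $\ngm(\prm)$ is entrywise positive, which both makes $\brn(\prm)$ a simple eigenvalue — so that $\brn$ is differentiable and the eigenvector expression is well posed — and provides the strictly positive $l(\prm),r(\prm)$ in the gradient. I expect this to be the crux, and it reduces to the M–matrix sign lemma ``an irreducible Metzler $B$ with $\boldsymbol{e}^{T}B<0$ componentwise has negative spectral abscissa and $-B^{-1}>0$'', applied to $\mdpl(\prm)-\diagdelta$ and $\mdpl(\prm)-\diaggamma$; everything else is Perron–Frobenius together with smoothness of the elementary operations involved.
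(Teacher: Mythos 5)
Your proposal is correct and follows essentially the same route as the paper: smoothness of $\prm\mapsto\statiodprm$ and $\prm\mapsto\ngm(\prm)$, entrywise positivity of $\ngm(\prm)$ via positivity of $-\paren{\mdpl(\prm)-\diagdelta}^{-1}$ and $-\paren{\mdpl(\prm)-\diaggamma}^{-1}$, and then differentiability of the spectral radius at an irreducible non-negative matrix together with the chain rule. The only difference is that you supply self-contained arguments (implicit function theorem for the stationary distribution, the M-matrix sign lemma, simple-eigenvalue perturbation theory) where the paper instead cites Meyer, Arino and Caswell for exactly these facts.
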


\begin{proof}
First, the stationary distribution is differentiable, as for $\delta t$ small enough, it is that of the irreducible stochastic matrix $\id_N + \delta t \, \mdpl\paren{\prm}^T$, which depends in a smooth way on the matrix \cite{meyer2000}, and $\prm\mapsto\mdpl(\prm)$ is smooth. 
Second, $\prm\mapsto\ngm\paren{\prm}$ is smooth. Indeed, from \resec{metapop_models} we know that 
$\ngm\paren{\prm} =
\diagbeta \, \mathrm{diag}\paren{\statiodprm}\,\paren{\mdpl(\prm) -\diagdelta}^{-1}\,\diaggamma\,\paren{\mdpl(\prm) -\diaggamma}^{-1}$.
Now, the maps $\prm\mapsto \statiodprm$ and $\prm\mapsto\mdpl\paren{\prm}$ are smooth, and so is matrix inversion. Third, for every $\prm\in\eprm$, $\ngm\paren{\prm}$ is positive. We said in \resec{metapop_models} that $\diagbeta$ and $\diaggamma$ are diagonal positive matrices. Moreover, we also recalled that, since $\mdpl\paren{\prm}$ is a diffusion matrix, $\statiodprm$ is a positive vector.
Finally, we know that $-\paren{\mdpl(\prm) -\diagdelta}^{-1}$ and $-\paren{\mdpl(\prm) -\diaggamma}^{-1}$ are positive: see for instance Lemma 1 of \citearinoseventeen \citet{arino17}. 
Fourth, $\prm\mapsto\brn(\prm)$ is differentiable. Indeed, thanks to \citecaswell \citet{caswell19}, the map $\ngm\mapsto\rho\paren{\ngm}$ is differentiable on the set of non-negative, irreducible matrices. Since $\ngm(\prm)$ is positive, it is \emph{a fortiori} irreducible. The formula then follows from \citecaswell \citet{caswell19} and the chain rule.
\end{proof}

\subsection{Algorithmic Aspects}
\label{sec:algo_complex}
The parameter space $\eprm$ is of size at most $\abs{\nods}^2$ (in the case of a complete graph).
Given the softmax parameterisation introduced above, the differential of the stationary distribution is of size $|\mathcal{N}|^3$, while that of the next-generation matrix, used to compute the differential of $\brn(\prm)$, is of size $|\mathcal{N}|^4$. As a result, optimising \epidaccr is costlier than optimising the other two losses.
An explicit formula exists for the differential of $\statiodprm$ (see \citegolub \citet{golubmey86}) but in our experiments, we computed the differential through automatic differentiation, using the Python library TensorFlow. 

Losses are optimised using standard gradient descent, iterated for $400$ steps. We mainly used handcrafted stepsizes of the form $s(i) = \phi \sqrt{|\mathcal{N}|}\exp(\frac{\log(2)i}{\rho})$, where $0<i\leq 400$ is the iterate; typically $\rho = 250$ and $\phi = 2.5\times 10^{-3}$.


\section{Numerical Simulations}
\label{sec:experiments}
First, we describe the experimental set-up in \resec{exp_setup}. Then, we study numerically the relation between the final size, and the basic reproduction number, in \resec{influence_brn}. We compare the overall performances of the policies for various graphs sizes and topologies in \ref{sec:direct_comparison}, before studying the effect of two parameters in \resectwo{hete_levels}{exp_typical_times}. Finally, we show our approach still holds on another reaction model in \resec{appli_sepir}.

\subsection{Experimental Set-up}
\label{sec:exp_setup}

For each numerical simulation, we start by generating a graph from a random graph generator, specified below. Then, to obtain $\vecorate$, we draw the outrates uniformly on $[0,4\times 10^{-1}]$. The coefficients of the parameter matrix $\prm$ of the softmax reference policy are drawn uniformly in $[-10^{-1}, 10^{-1}]$. Finally, when needed, we renormalise the diffusion matrix by the typical time of diffusion $\tau >0$, which values we specify below.


The epidemiological coefficients are distributed according to the absolute values of normal variables, which parameters are available in the configuration files in the code. Having drawn the $\delta_\node$'s, we compute the $\beta_\node$'s coefficients in such a way that the basic reproduction numbers of the nodes, $\brn(\node)=\beta_\node \statiodmref(\node)/\delta_\node$, are distributed around the threshold $1$: some $\brn(\node)$'s are greater than $1$, and some lesser. 

For each loss, we write $\prmopt$ the parameter obtained at the end of training. With it, we can compute the optimal policies $\pol(\prmopt)$ associated with the different losses.

We then simulate the epidemic with the different policies on the time interval $[0, 1000]$. We use a uniform time discretisation step of $\Delta t = \min(1, \tau)$, where $\tau$ is the typical time at which diffusion occurs, and a time-discretisation scheme coinciding at first order with an Euler scheme, but preserving the positivity of the vectors $S$, $E$, $I$ and $R$. For each setting, the population is initially distributed according to the reference stationary distribution $\statiodref$, and in $2$ nodes chosen at random, $5\%$ of the population is changed from susceptible to exposed.

Finally, we measure the worth of every policy by the relative final size of the epidemic with respect to the reference policy. Namely, if the parameter of the policy is $\prmopt$, we compute $\sum_{\node\in\nods} R^{\mdpl(\prmopt)}_\node(\infty)/\sum_{\node\in\nods} R^{\mdplref}_\node(\infty)$.

\subsection{Basic Reproduction Number and Final Size}
\label{sec:influence_brn}
First, we study the relation between the basic reproduction number and the final size of the epidemic. We display on \refig{influence_brn} the final size as a function of the basic reproduction number. We conduct the experiment for four random graphs: the \erdosrenyi, which is a standard model, the Waxman graph, which is a geometric graph, and the Relaxed Caveman graph and \brbalbert, which exhibit a somewhat more constrained structure. We see the relation is increasing (the bigger the $\brn$, the bigger the final size), and that this applies to all types of graphs. That reducing the $\brn$ would ultimately reduce the final size was expected, but we also knew the relation was not straightforward (\resectwo{fsr}{fsize}). Therefore, these results validate our approach. The lines on the plot are the regression lines. We see the relation is closer to linear for the \waxman graphs (with sum of squared residuals $R^2=0.48$), than for the other graphs ($R^2 \geq 0.70$, with maximum at $R^2=0.86$ for \erdosrenyi graphs), though we do not see a clear explanation in terms of the different graphs topologies.  

\begin{figure}
    \centering
    \includegraphics[width=\linewidth]{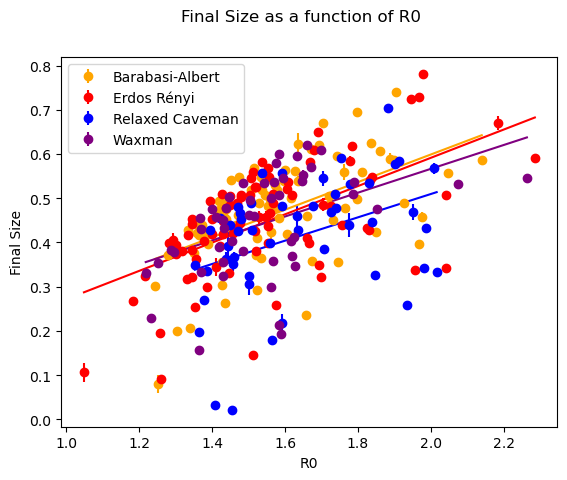}
    \caption{Final Size as a Function of the $\brn$, for Various Graphs of Size $30$, for a SEIR reaction. The lines are the regression lines.}
    \label{fig:influence_brn}
\end{figure}

\subsection{Policies Comparison}
\label{sec:direct_comparison}
Then, we compare the performances of the three policies obtained form the corresponding three losses. First, we consider standard \erdosrenyi graphs of size ranging from $10$ to $50$ nodes, and show their relative final sizes. We see on \refig{var_sizes_rel_rcv} the performance is quite good for all policies and all sizes of graphs, as the final size is reduced by at least $20\%$ for all policies. Then, we see \epidpolaccr performs best overall.
Second, in \retab{various_graphs_seir}, we report the performance of the losses, for different network topologies. We use the four random graphs already used in \resec{influence_brn}.
All graphs have size $30$. Again, we show the relative final sizes. The performances remain quite good for every type of graph and every policy, with the median consistently below the $50\%$ level, and often much below. \epidpolaccr performs best in all cases. We think it is because the loss it comes from incorporates more information about the overall dynamics of the system, than the other two losses, which are derived as limit cases. In particular, $\epidaccr$ has a direct dependency on the diffusion matrix $\mdpl(\prmopt)$, contrary to the other losses. No discernible pattern is distinguishable between the topologies, suggesting performance is not too sensitive to it. These simulations prove the overall worth of our approach for epidemic control. 

\begin{figure}
    \centering
    \includegraphics[width=\linewidth]{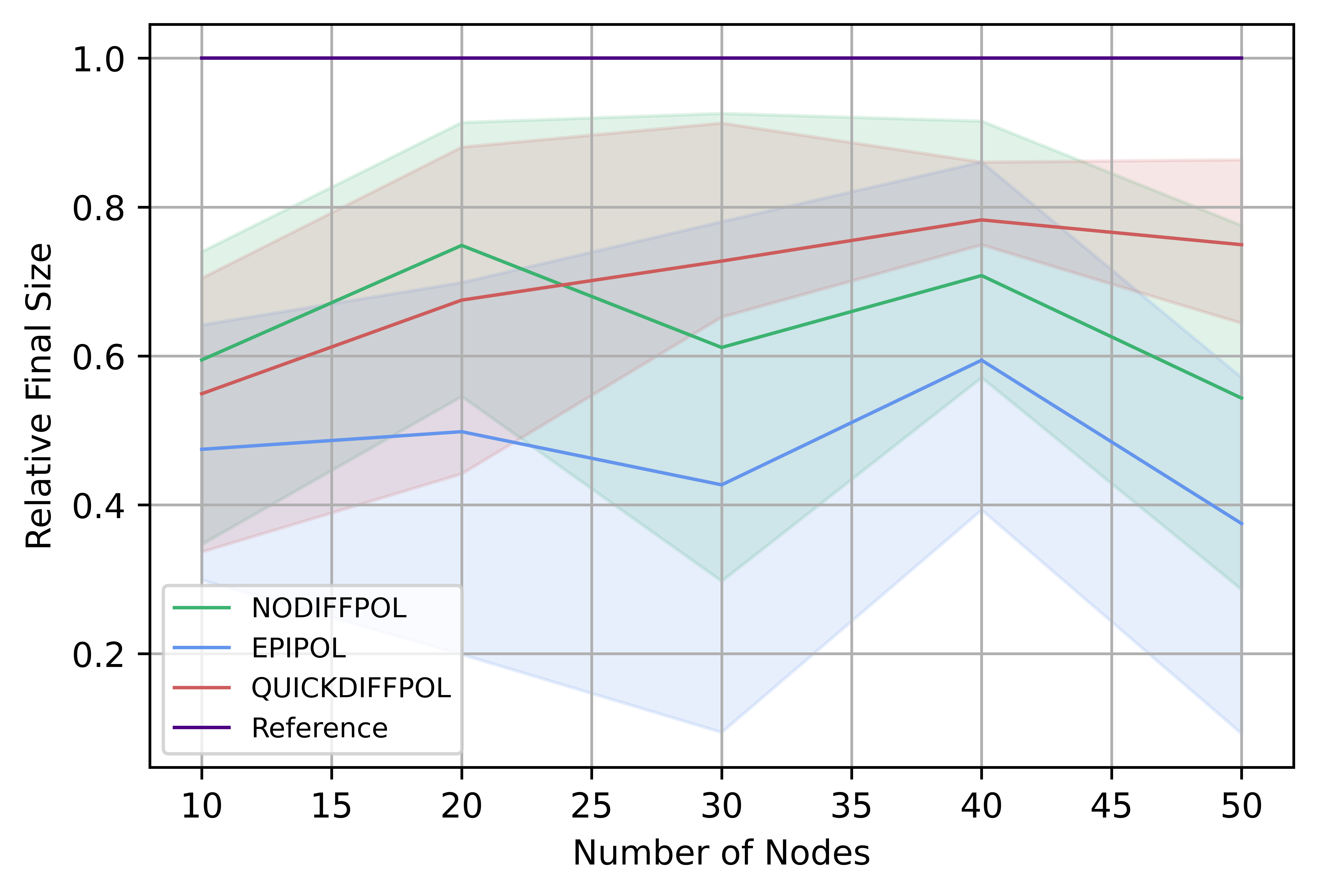}
    \caption{Relative final size for the three policies, for various sizes of \erdosrenyi graphs, for a SEIR reaction. The final sizes are those of the policies $\epidpolaccr$ (blue), $\rapsumpolaccr$ (red) and $\alphapolaccr$ (green), obtained from their respective losses. The lines are the median values, and the shaded areas gather the values in the $[25\%,75\%]$ intervals.}
    \label{fig:var_sizes_rel_rcv}
\end{figure}

\begin{table}
\centering
\scriptsize
\begin{tabular}{ |p{0.2cm}|p{1.65cm}|p{1.65cm}|p{1.65cm}|p{1.65cm}|  }
\hline
\multicolumn{1}{|c}{} & \multicolumn{4}{|c|}{Graph Type} \\
\hline
Pol. & \erdosrenyi & \waxman & \brbalbert & \relcav \\
\hline
(A) & \textbf{0.55} [0.38, 0.70] & \textbf{0.26} [0.14, 0.50]  & \textbf{0.29} [0.01, 0.52] & \textbf{0.34} [0.05, 0.53] \\
(B) & 0.63 [0.44, 0.74] & 0.59 [0.31, 0.79] & 0.44 [0.15, 0.69] & 0.40 [0.05, 0.68] \\
(C) & 0.71 [0.70, 0.92] & 0.55 [0.57, 0.74] & 0.59 [0.17, 0.83] & 0.60 [0.37, 0.70] \\
\hline
\end{tabular}
\caption{Relative Final Size for Various Policies (obtained from the corresponding losses) and Various Graphs of size 40 for an SEIR reaction. Median, and [20\%, 80\%] intervals indicated. In bold, the smallest value for each graph type. (A): \epidpolaccr; (B): \rapsumpolaccr; (C): \alphapolaccr.}
\label{tab:various_graphs_seir}
\end{table}

\subsection{Influence of the Network Heterogeneity}
\label{sec:hete_levels}

Next, we study the influence of network heterogeneity. We call heterogeneity of the network the dispersion of the values of the individual basic reproduction numbers, and of the $\delta_\node'$s. For several values of $x\in[0, 1]$, \erdosrenyi graphs of size $30$ were generated, and the individual basic reproduction numbers, and the $\delta_\node$'s, were randomly sampled from normal distributions with standard deviations equal to $x\sigma_{\delta}$ and $x \sigma_{\brn}$, respectively. The scale factor $x\in[0,1]$ thus quantifies the heterogeneity of the network. (The $\gamma_\node$'s were left constant, so as not to advantage the epidemiological loss, which could take direct advantage of it, while the limit losses could only do so indirectly, through the stationary distribution.)

\begin{figure}
    \centering
    \includegraphics[width=\linewidth]{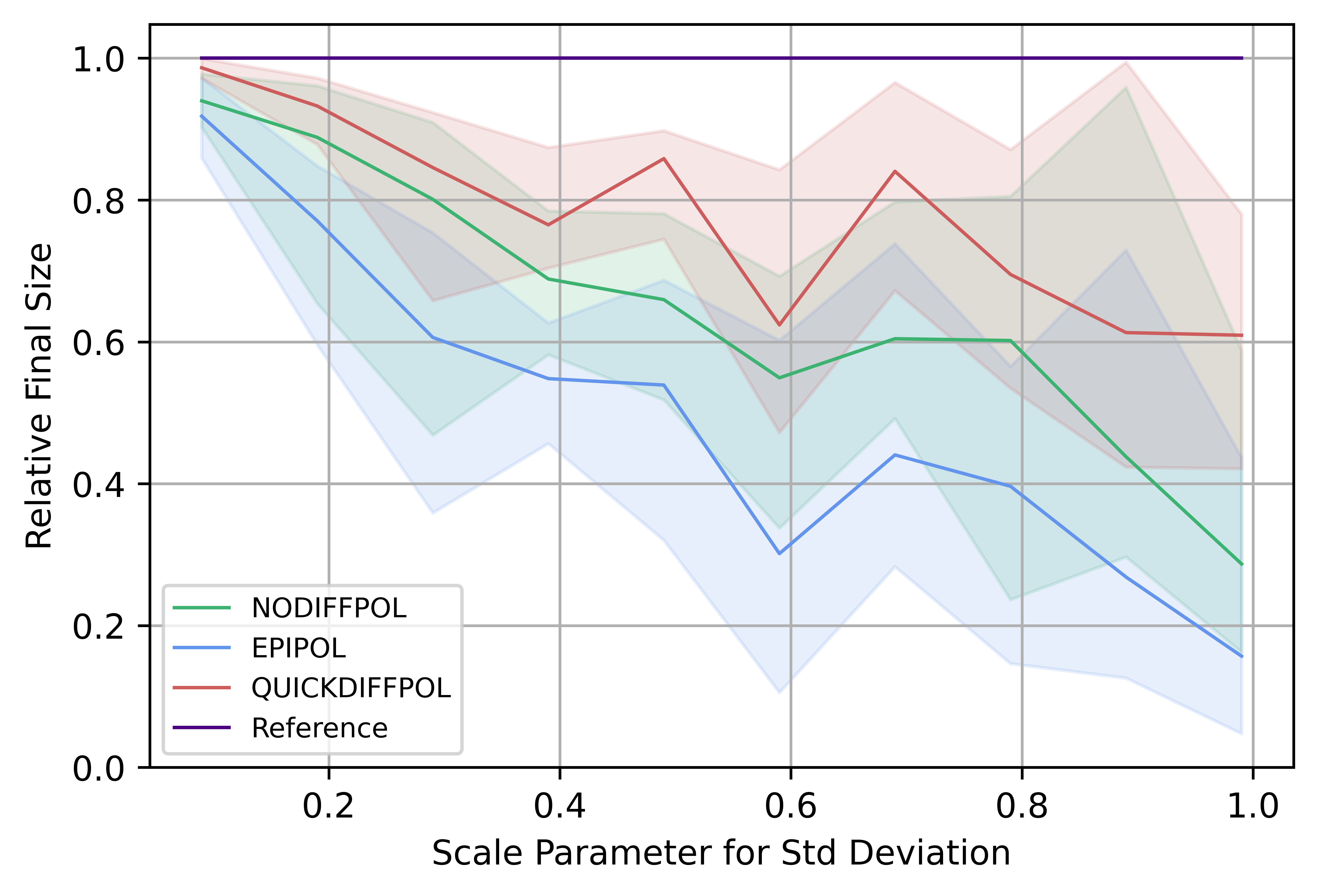}
    \caption{Relative final size for different heterogeneity levels, for a SEIR reaction, for \erdosrenyi Graphs of size $|\nods|=30$. The final sizes are those of the policies $\epidpolaccr$ (blue), $\rapsumpolaccr$ (red) and $\alphapolaccr$ (green), obtained from their respective losses. The lines are the median values, and the shaded areas gather the values in the $[25\%,75\%]$ intervals.}
    \label{fig:var_heterog_rel_rcv}
\end{figure}
 
We show on \refig{var_heterog_rel_rcv} the relative final size as a function of the heterogeneity factor $x$. We first see the final size decreases for all policies, as the scale parameter tends to $1$: indeed, the more heterogeneous the network, the more leeway there is for optimisation. Then, we see \epidpolaccr is consistently performing better than the policies derived from the limit losses: this suggests that \epidaccr is better able to exploit the heterogeneity, which we think is linked to the fact it incorporates knowledge about the dynamics, and not only the population distribution. These results show that, in order to control the epidemic by acting on the flows, there needs to be disparities in the reaction terms of the network: if all nodes \gu{look the same}, redirecting the flows will not be very worthy. On the contrary, as soon as the network displays some degree of heterogeneity, flow redirection proves efficient.

\subsection{Influence of the Rate of Diffusion}
\label{sec:exp_typical_times}

\begin{figure}
    \centering
    \includegraphics[width=\linewidth]{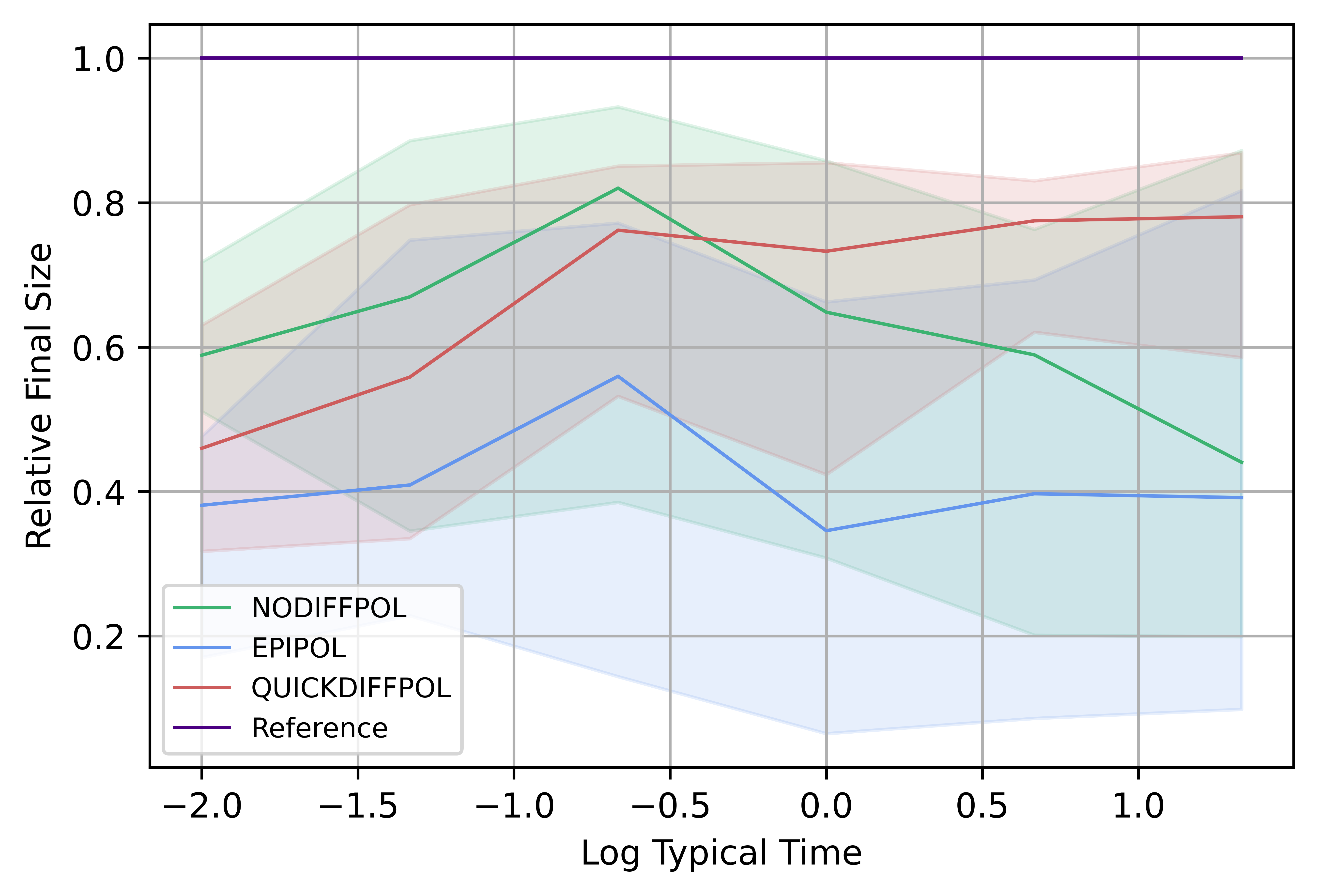}
    \caption{Relative final size for different typical times, for an SEIR reaction, for \erdosrenyi Graphs of size $\abs{\nods}=25$. The final sizes are those of the policies $\epidpolaccr$ (blue), $\rapsumpolaccr$ (red) and $\alphapolaccr$ (green), obtained from their respective losses. The lines are the median values, and the shaded areas gather the values in the $[25\%,75\%]$ intervals.}
    \label{fig:var_typical_times_rel_rcv}
\end{figure}

We now study the influence of the rate of diffusion. On \refig{var_typical_times_rel_rcv}, we show the relative final sizes for a range of typical times $\tau$ of diffusion (equivalently, $1/\tau$ is the rate of diffusion). When $\tau\to 0$, the diffusion happens very quickly, while it happens slowly when $\tau \to \infty$. First, we see that for high typical times, \epidpolaccr and \alphapolaccr, give close results: indeed, \alphalaccr is derived from the limit of the basic reproduction number when $\tau\to\infty$ (\resec{optim_flows}). Second, when $\tau$ diminishes, the results for \alphapolaccr deteriorate: \alphalaccr is no longer fit for these values. Third, the performance of \rapsumpolaccr improves when $\tau\to 0$. Again, this was expected, as \rapsumaccr is tailored for the limit case of very quick diffusion. Fourth, and finally, we see that the \epidaccr produces the best results across the range of times of diffusion, emphasising its overall worth for epidemic reaction-diffusion control. Therefore, these results highlight the importance of the rate of diffusion regarding the performance of epidemic control with flow redirection.

\subsection{Application to a SEPIR reaction}
\label{sec:appli_sepir}
Finally, to show the approach we have developed is somewhat generic with respect to the reaction model, we now show we can control a SEPIR reaction-diffusion. The SEPIR model, described in the supplementary materials, 
is the extension to a reaction-diffusion of the scalar model introduced in \citealizon \citet{alizon:hal-02882687} under the name \gu{SEAIR}. We conducted experiments on various graphs of size $\abs{\nods}=30$, and report the results in \retab{various_graphs_seir}. As is the case for SEIR, we see that performances are satisfying accross different topologies, and for every policy considered. Again, the epidemiological policy \epidaccr (which expression for SEPIR is detailed in \reapp{sepir}) gives the best policy. The relative final sizes tend to be smaller than for SEIR. This is probably linked to the fact the final size decreases quicker with the basic reproduction number than for SEIR, as we illustrate in \reapp{sepir_brn_fsize}. The results for SEPIR suggest the approach could be extended to many reaction-diffusion systems.

\begin{table}
\centering
\scriptsize
\begin{tabular}{ |p{0.2cm}|p{1.65cm}|p{1.65cm}|p{1.65cm}|p{1.65cm}|  }
\hline
\multicolumn{1}{|c}{} & \multicolumn{4}{|c|}{Graph Type} \\
\hline
Pol. & \erdosrenyi & \waxman & \brbalbert & \relcav \\
\hline
(A) & \textbf{0.39} [0.11, 0.76] & \textbf{0.19} [0.09, 0.53]  & \textbf{0.19} [0.04, 0.57] & \textbf{0.14} [0.03, 0.40] \\
(B) & 0.48 [0.17, 0.70] & 0.56 [0.17, 0.70]  & \textbf{0.19} [0.06, 0.58]  & 0.33 [0.08, 0.74] \\
(C) & 0.46 [0.17, 0.72] & 0.42 [0.16, 0.72]  & 0.26 [0.11, 0.59] & 0.22 [0.05, 0.57] \\
\hline
\end{tabular}
\caption{Relative Final Size for Various Policies (obtained from the corresponding losses) and Various Graphs of size $30$ for an SEPIR reaction. Median, and [20\%, 80\%] intervals indicated. In bold, the smallest value for each graph type. (A): \epidpolaccr; (B): \rapsumpolaccr; (C): \alphapolaccr.}
\label{tab:various_graphs_sepir}
\end{table}

\section{Conclusion, Future Works}
\label{sec:conclusion}
We have shown we can control an epidemic reaction-diffusion on a directed, and heterogeneous, network by redirecting the flows, thanks to the optimisation of well-designed loss functions, in particular the basic reproduction number of the model. We have provided a final size relation linking the basic reproduction number to the epidemic final sizes, for diffusions around a reference diffusion with basic reproduction number less than $1$. Numerically, we have shown control is possible for different topologies, network heterogeneity levels, and speeds of diffusion. Moreover, our experimental results highlight the relevance of the $\brn$-based loss, compared to more straightforward losses. However, these improved performances should be balanced against the highest computational costs it entails with respect to the other losses.

Overall, we believe our results make the case for flow redirection as a relevant control tool of epidemic reaction-diffusion systems. Further, we have identified key network parameters which may usefully inform the optimisation design. In turns, this stresses the need for quality data collection about networks.

One theoretical limitation of our work is the fact the final size relation only stands for $\brn(\mdplref)<1$: it would be interesting to extend it to the case $\brn(\mdplref)>1$. Then, optimisation reduces the final size, but modifies the network flow structure. Attempting to control the dynamics, while modifying as little as possible valuable metrics representing the usual flow structure of the network, would represent an interesting direction of future research.


\printbibliography

\newpage

\appendix

\section{Proofs of the Uniform Stability: \relem{unif-stab-dfe}}
\label{app:pstab-dfe}
To prove \reprop{fsize-r0-smaller-one}, we first need a strengthening of the standard stability result recalled in \resec{metapop_models}. This strenghtening is obtained in \relem{unif-stab-dfe}, which establishes a uniform stability property for the Disease Free equilibrium. 
\begin{lemma}[Uniform stability of the Disease Free Equilibrium]
\label{lem:unif-stab-dfe}
Let $\mdplref$ be a diffusion matrix, and assume $\brn(\mdplref)<1$. Then, for any $\eps >0$, there is a ball $\bo$ around $\mdplref$, and $\eta > 0$ such that, for every initial condition $\big(S(0), E(0), I(0), R(0)\big)$ satisfying
$\nrm{\big(S(0), E(0), I(0), R(0)\big) - \big(\statiodmref, 0, 0, 0\big)} < \eta$,
for any diffusion matrix $\mdpl$ in $\bo$, for all $t\geq 0$, we have
\begin{equation*}
\nrm{\paren{S^\mdpl(t), E^\mdpl(t), I^\mdpl(t), R^\mdpl(t)} - \paren{\statiodfmat{\mdpl}, 0, 0, 0}}\leq \eps,
\end{equation*}
where $\big(S^\mdpl(t), E^\mdpl(t), I^\mdpl(t), R^\mdpl(t)\big)$ is the solution for $t \geq 0$ of \reeq{seir-metapop}, with initial condition $\big(S(0), E(0), I(0), R(0)\big)$.
\end{lemma}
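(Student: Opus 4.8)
The plan is to upgrade the pointwise asymptotic stability of the DFE --- which, for each \emph{fixed} diffusion matrix with $\brn<1$, is the standard next-generation statement recalled in \resec{metapop_models} --- into a bound that is uniform in time \emph{and} in the diffusion matrix $\mdpl$ ranging over a small ball $\bo$ around $\mdplref$. The mechanism is that $\brn(\mdplref)<1$ is equivalent, by the next-generation theory \cite{MAT09, arino05}, to the block matrix $\mathbf{F}_{\mdplref}+\mathbf{V}_{\mdplref}$ of \resec{pfsize-relation} being Hurwitz, and that being Hurwitz is an \emph{open} condition: it survives small perturbations of $\mdpl$ and of the diagonal matrix $\diag{\statiod}$ entering $\mathbf{F}_\mdpl$.

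\textbf{Choice of constants.} Fix $\eps>0$. Using continuity of eigenvalues and of the map $\mdpl\mapsto\statiod$ (Perron--Frobenius perturbation theory \cite{meyer2000}, cf.\ \relem{diff_statiod_brn}), I would first pick $\bo$ and some $\rho_0\in(0,\eps)$ so small that, for every $\mdpl\in\bo$, the Metzler matrix $\mathbf{F}^{+}_\mdpl+\mathbf{V}_\mdpl$ obtained from $\mathbf{F}_\mdpl+\mathbf{V}_\mdpl$ by replacing $\diag{\statiod}$ with $\diag{\statiod}+\rho_0\,\id$ is still Hurwitz. Then $C:=\sup_{\mdpl\in\bo}\bigl(\sup_{t\ge0}\nrm{e^{(\mathbf{F}^{+}_\mdpl+\mathbf{V}_\mdpl)t}}+\int_0^{\infty}\nrm{e^{(\mathbf{F}^{+}_\mdpl+\mathbf{V}_\mdpl)t}}\,dt\bigr)<\infty$, and shrinking $\bo$ makes $\sup_{\mdpl\in\bo}\nrm{\statiod-\statiodmref}$ as small as desired; the threshold $\eta$ is fixed at the very end, small relative to $\eps$, $\rho_0$ and $C$. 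Throughout, the initial condition is nonnegative with total mass $1$, and I write $c:=\sum_{\node\in\nods}S_\node(0)\in[1-\eta,1]$.

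\textbf{Controlling the four blocks in turn}, each time by the comparison Theorem~B.1 of \citet{smith95} already used in \resec{pfsize-relation}. \emph{(i) Upper bound on $S^\mdpl$:} since $S^\mdpl,I^\mdpl\ge0$, $dS^\mdpl/dt=\mdpl S^\mdpl-\diagbeta S^\mdpl\odot I^\mdpl\le\mdpl S^\mdpl$ coordinate-wise, the majorant $\dot\sigma=\mdpl\sigma$ being Metzler, so $S^\mdpl(t)\le\sigma(t):=e^{\mdpl t}S^\mdpl(0)$; as the columns of $\mdpl$ sum to $0$, $e^{\mdpl t}$ is $\ell^1$-nonexpansive and fixes $\statiod$ (because $\mdpl\statiod=0$), hence $\nrm{\sigma(t)-c\,\statiod}_1=\nrm{e^{\mdpl t}(S^\mdpl(0)-c\,\statiod)}_1\le\nrm{S^\mdpl(0)-\statiodmref}_1+\nrm{\statiodmref-\statiod}_1+(1-c)\le\rho_0$ once $\eta,\bo$ are small; since $c\le1$ and $\statiod\ge0$ this gives $S^\mdpl(t)\le\statiod+\rho_0\boldsymbol{e}$ for all $t\ge0$. \emph{(ii) Infected block:} with $\cpl{E^\mdpl}{I^\mdpl}\ge0$, step (i) and the definition of $\mathbf{F}^{+}_\mdpl$ give $\tfrac{d}{dt}\cpl{E^\mdpl}{I^\mdpl}\le(\mathbf{F}^{+}_\mdpl+\mathbf{V}_\mdpl)\cpl{E^\mdpl}{I^\mdpl}$ coordinate-wise, and since $\mathbf{F}^{+}_\mdpl+\mathbf{V}_\mdpl$ is Metzler, comparison yields $0\le\cpl{E^\mdpl(t)}{I^\mdpl(t)}\le e^{(\mathbf{F}^{+}_\mdpl+\mathbf{V}_\mdpl)t}\cpl{E^\mdpl(0)}{I^\mdpl(0)}$; hence $\nrm{\cpl{E^\mdpl(t)}{I^\mdpl(t)}}\le C\eta$ for all $t\ge0$ and $\int_0^{\infty}\nrm{I^\mdpl(s)}\,ds\le C\eta$. \emph{(iii) The $R^\mdpl$ block and a lower bound on $S^\mdpl$:} from $dR^\mdpl/dt=\diagdelta I^\mdpl+\mdpl R^\mdpl$ and $\ell^1$-nonexpansiveness, $\nrm{R^\mdpl(t)}_1\le\nrm{R^\mdpl(0)}_1+\nrmop{\diagdelta}\,C\eta$; and $dS^\mdpl/dt\ge\mdpl S^\mdpl-c_1 I^\mdpl$ for a constant $c_1$ (using $S^\mdpl_\node\le\statiod(\node)+\rho_0$), so comparison gives $S^\mdpl(t)\ge\sigma(t)-c_1\int_0^t e^{\mdpl(t-s)}I^\mdpl(s)\,ds\ge\statiod-\bigl((1-c)\nrm{\statiod}_\infty+\rho_0+c_1C\eta\bigr)\boldsymbol{e}$, which together with (i) bounds $\nrm{S^\mdpl(t)-\statiod}$. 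Taking $\eta$ so small that each of these four bounds is below $\eps/4$ (up to the fixed norm-equivalence constants) concludes.

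\textbf{Main obstacle.} The crux is reconciling two uniformities: the bound must hold for \emph{all} $t\ge0$ --- so continuous dependence on initial data, valid only on compact time intervals, is not enough --- and simultaneously for all $\mdpl\in\bo$. Time-uniformity is obtained by reducing the infected coordinates to a \emph{linear} Metzler system, which turns \gu{stays bounded} into \gu{$\sup_{t}\nrm{e^{(\mathbf{F}^{+}_\mdpl+\mathbf{V}_\mdpl)t}}<\infty$}, i.e.\ into Hurwitzness; $\mdpl$-uniformity then follows because Hurwitzness is open, so $C<\infty$ on the whole ball. The delicate point hidden in step (i) is that one cannot bound $S^\mdpl$ crudely by $1$: the matrix $\mathbf{F}_\mdpl+\mathbf{V}_\mdpl$ with $\diag{\statiod}$ replaced by the identity is the next-generation matrix of a fully susceptible population in every node, which is in general \emph{not} Hurwitz --- so one genuinely needs $S^\mdpl(t)$ to stay within $\rho_0$ of $\statiod$ uniformly, which is exactly why step (i) must exploit the contractivity of $e^{\mdpl t}$ rather than a trivial estimate.
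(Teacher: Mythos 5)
Your argument is correct and shares the paper's skeleton --- dominate the infected block by a linear, Metzler, Hurwitz system via the comparison Theorem~B.1 of \citet{smith95}, and get uniformity in $\mdpl$ from the openness of the Hurwitz property --- but it executes two steps differently, and both changes are genuine simplifications. First, where the paper compares $\cpl{E^\mdpl}{I^\mdpl}$ with a \emph{time-varying} majorant driven by $\mathbf{A}(\mdpl,t)$, whose off-diagonal block contains $N_\mdpl(t)=e^{t\mdpl}N_0$, and therefore needs a Lyapunov argument for non-autonomous systems (\relem{upblinear-inf}), you first freeze the susceptible block at the constant coordinate-wise bound $\statiod+\rho_0\boldsymbol{e}$ and compare with the \emph{autonomous} system $\mathbf{F}^{+}_\mdpl+\mathbf{V}_\mdpl$; this collapses \relemdeux{cont_sols_chapman}{upblinear-inf} into the single statement that a Hurwitz matrix remains uniformly exponentially stable on a small ball. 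Second, your estimate $S^\mdpl(t)\leq e^{t\mdpl}S^\mdpl(0)$ combined with the $\ell^1$-nonexpansiveness of $e^{t\mdpl}$ (columns of $\mdpl$ sum to zero) replaces the paper's decomposition $\mathbb{R}^{\abs{\nods}}=\mathbb{R}\,\statiod\oplus\mathcal{H}$: you only need the deviation from $c\,\statiod$ to stay \emph{bounded}, not to decay, and nonexpansiveness gives that for free. Your closing step --- a separate lower-bound comparison for $S^\mdpl$ via Duhamel --- is also valid, though the paper's identity $S^\mdpl=N_\mdpl-E^\mdpl-I^\mdpl-R^\mdpl$ reaches the same conclusion with less work. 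The one place where you are too quick is the finiteness of $C=\sup_{\mdpl\in\bo}\bigl(\sup_{t\geq 0}\nrm{e^{(\mathbf{F}^{+}_\mdpl+\mathbf{V}_\mdpl)t}}+\int_0^{\infty}\nrm{e^{(\mathbf{F}^{+}_\mdpl+\mathbf{V}_\mdpl)t}}\,dt\bigr)$: openness of the Hurwitz property makes each summand finite for each fixed $\mdpl$, but the supremum over the ball requires a decay bound $\kappa\,e^{-\lambda t}$ with $\kappa$ and $\lambda$ \emph{uniform} on the ball, which is exactly the common-Lyapunov-function argument of the paper's \relem{cont_fdtal_sol} (solve $\mathbf{P}\mathbf{Q}_0+\mathbf{Q}_0^T\mathbf{P}=-\id$ at the centre and perturb). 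With that lemma imported, your proof is complete.
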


To prove \relem{unif-stab-dfe}, we need several intermediary results.

\begin{lemma}[Uniform exponential boundedness]
\label{lem:cont_fdtal_sol}

Let $\mathbf{Q}_0$ be a, finite dimensional, matrix the eigenvalues of which have all negative real parts. Let $\nrm{\cdot}$ be any norm on the space of matrices. Then, we may find $\lambda >0$, $\kappa\geq 0$, and a ball $\bo$ around $\mathbf{Q}_0$ such that, for every $\mathbf{Q}\in\bo$, for all $t \geq 0$, we have
$\nrm{\exp\paren{t\mathbf{Q}}} \leq \kappa\,\exp\paren{-\lambda t}$.
\end{lemma}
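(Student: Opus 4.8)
The plan is to reduce the desired uniform bound to the existence of a quadratic Lyapunov form $x\mapsto x^{*}\mathbf{P}(\mathbf{Q})x$ whose matrix $\mathbf{P}(\mathbf{Q})$ depends \emph{continuously} on $\mathbf{Q}$ near $\mathbf{Q}_0$, so that compactness of a small ball forces the constants to be uniform. First I would fix $\mu>0$ with $\max_i \mathrm{Re}\,\lambda_i(\mathbf{Q}_0) < -2\mu$. Since the eigenvalues of a matrix depend continuously on the matrix (the fact already used in the proof of \reprop{fsize-r0-smaller-one}), the spectral abscissa $\mathbf{Q}\mapsto a(\mathbf{Q}):=\max_i\mathrm{Re}\,\lambda_i(\mathbf{Q})$ is continuous, so I can pick a closed ball $\overline{\bo}$ around $\mathbf{Q}_0$ on which $a(\mathbf{Q})\leq -\mu<0$; in particular every $\mathbf{Q}\in\overline{\bo}$ is Hurwitz.

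Next, for each $\mathbf{Q}\in\overline{\bo}$ I would set $\mathbf{P}(\mathbf{Q})=\int_0^{\infty} e^{s\mathbf{Q}^{*}}e^{s\mathbf{Q}}\,ds$ (with $\mathbf{Q}^{*}$ the conjugate transpose, which is just the transpose in the real case relevant to \relem{unif-stab-dfe}). This integral converges because, for a single Hurwitz matrix with $a(\mathbf{Q})\leq-\mu$, the Jordan form shows $\nrmop{e^{s\mathbf{Q}}}$ is bounded by a polynomial in $s$ times $e^{-\mu s}$, hence integrable — note this is a per-$\mathbf{Q}$ statement, so no circularity. Differentiating $s\mapsto e^{s\mathbf{Q}^{*}}e^{s\mathbf{Q}}$ and integrating from $0$ to $\infty$ shows $\mathbf{P}(\mathbf{Q})$ solves the Lyapunov equation $\mathbf{Q}^{*}\mathbf{P}+\mathbf{P}\mathbf{Q}=-\id$, and $\mathbf{P}(\mathbf{Q})$ is Hermitian positive definite ($x^{*}\mathbf{P}(\mathbf{Q})x=0$ forces $e^{s\mathbf{Q}}x\equiv 0$, so $x=0$). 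The key point for uniformity: the linear map $\mathbf{P}\mapsto\mathbf{Q}^{*}\mathbf{P}+\mathbf{P}\mathbf{Q}$ has eigenvalues $\overline{\lambda_i(\mathbf{Q})}+\lambda_j(\mathbf{Q})$, all with negative real part when $\mathbf{Q}$ is Hurwitz, hence it is invertible; thus $\mathbf{Q}\mapsto\mathbf{P}(\mathbf{Q})=-\big(\mathbf{P}\mapsto \mathbf{Q}^{*}\mathbf{P}+\mathbf{P}\mathbf{Q}\big)^{-1}(\id)$ is continuous on $\overline{\bo}$ by continuity of matrix inversion. Consequently $\mathbf{Q}\mapsto\nrmop{\mathbf{P}(\mathbf{Q})}$ and $\mathbf{Q}\mapsto\lambda_{\min}(\mathbf{P}(\mathbf{Q}))$ are continuous, and by compactness of $\overline{\bo}$ together with positive-definiteness of each $\mathbf{P}(\mathbf{Q})$ there exist constants $0<m\leq M<\infty$ with $m\leq\lambda_{\min}(\mathbf{P}(\mathbf{Q}))$ and $\nrmop{\mathbf{P}(\mathbf{Q})}\leq M$ for all $\mathbf{Q}\in\overline{\bo}$.

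Then I would run the standard energy estimate: for $\mathbf{Q}\in\overline{\bo}$ and any vector $x_0$, put $x(t)=e^{t\mathbf{Q}}x_0$ and $V(t)=x(t)^{*}\mathbf{P}(\mathbf{Q})x(t)$. The Lyapunov equation gives $\dot V(t)=-\nrm{x(t)}^{2}\leq -V(t)/M$, whence $V(t)\leq e^{-t/M}V(0)$; combining with $m\nrm{x(t)}^{2}\leq V(t)$ and $V(0)\leq M\nrm{x_0}^{2}$ yields $\nrm{e^{t\mathbf{Q}}x_0}^{2}\leq (M/m)e^{-t/M}\nrm{x_0}^{2}$, i.e.\ $\nrmop{e^{t\mathbf{Q}}}\leq\sqrt{M/m}\;e^{-t/(2M)}$ for all $t\geq 0$ and all $\mathbf{Q}\in\overline{\bo}$. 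Finally I would pass from the operator norm to the arbitrary matrix norm $\nrm{\cdot}$ of the statement using equivalence of norms in finite dimension, obtaining a constant $c>0$ with $\nrm{e^{t\mathbf{Q}}}\leq c\,\nrmop{e^{t\mathbf{Q}}}$; taking $\lambda=1/(2M)$, $\kappa=c\sqrt{M/m}$, and $\bo$ the interior of $\overline{\bo}$ completes the proof.

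The main obstacle is precisely the \emph{uniformity} of the constants $\kappa,\lambda$ over the ball: pointwise exponential bounds are classical, but a naive compactness argument stumbles because transient growth of $\nrmop{e^{t\mathbf{Q}}}$ for small $t$ is not controlled by the spectral abscissa. The Lyapunov-form route handles this cleanly by transferring the entire difficulty onto the continuity of $\mathbf{Q}\mapsto\mathbf{P}(\mathbf{Q})$, for which the Lyapunov equation provides an explicit formula via a continuously-invertible linear operator — no re-derivation of exponential bounds is needed. (A Gelfand-formula plus semigroup-composition argument is also possible but requires a uniform spectral-radius estimate and is messier to make rigorous.)
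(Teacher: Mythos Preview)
Your proof is correct and shares the Lyapunov-function backbone with the paper's, but the handling of uniformity differs in a way worth noting. The paper fixes a \emph{single} $\mathbf{P}$ solving $\mathbf{P}\mathbf{Q}_0+\mathbf{Q}_0^T\mathbf{P}=-\id$ and then observes that, by continuity of $\mathbf{Q}\mapsto\mathbf{P}\mathbf{Q}+\mathbf{Q}^T\mathbf{P}$, this same $\mathbf{P}$ still yields a negative-definite symmetric part on a whole ball around $\mathbf{Q}_0$; uniformity of the decay rate then comes for free from a single upper bound $-\lambda$ on the eigenvalues of $\mathbf{P}\mathbf{Q}+\mathbf{Q}^T\mathbf{P}$ over the ball. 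You instead let $\mathbf{P}(\mathbf{Q})$ vary so that the Lyapunov identity $\mathbf{Q}^{*}\mathbf{P}(\mathbf{Q})+\mathbf{P}(\mathbf{Q})\mathbf{Q}=-\id$ holds exactly for every $\mathbf{Q}$, and recover uniformity via continuity of $\mathbf{Q}\mapsto\mathbf{P}(\mathbf{Q})$ and compactness of the closed ball. Both routes are sound; the paper's is shorter because it avoids the detour through the Lyapunov operator's invertibility and the compactness argument on $\lambda_{\min}(\mathbf{P}(\mathbf{Q}))$, while yours makes the dependence of constants on the ball more explicit and would adapt more readily if one wanted to track how $\kappa,\lambda$ degrade as the ball grows.
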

\begin{proof}
Let $\mathbf{P}$ be positive definite such that $\mathbf{P}\mathbf{Q}_0 + \mathbf{Q}_0^T\mathbf{P} = -\id$. Such a matrix exists because the eigenvalues of $\mathbf{Q}_0$ have negative real parts. 
For any vector $X$, we write $\nrm{X}^2_{\mathbf{P}} = X^T \mathbf{P} X$. Since all norms are equivalent, we can choose $\alpha > 0$ such that $\alpha\nrm{X}_{\mathbf{P}} \leq \nrm{X}$ for all $X$.
Let then $\mathcal{B}$ be a ball around $\mathbf{Q}_0$, such that, for every $\mathbf{Q}\in\mathcal{B}$, 
all the eigenvalues of $\mathbf{P}\mathbf{Q} + \mathbf{Q}^T\mathbf{P}$ are negative. This is possible because $\mathbf{Q}\mapsto \mathbf{P}\mathbf{Q} + \mathbf{Q}^T \mathbf{P}$ is continuous, and because the matrices $\mathbf{P}\mathbf{Q}+\mathbf{Q}^T \mathbf{P}$ are symmetric, so that all their eigenvalues are real. Let $-\lambda<0$ be any upper bound for the eigenvalues of the $\mathbf{P}\mathbf{Q}+\mathbf{Q}^T\mathbf{P}$'s for $\mathbf{Q}\in\mathcal{B}$.
Let now $\mathbf{Q} \in \mathcal{B}$, $X_0$ be a vector, and $X$ be the solution of
$\frac{dX}{dt} = \mathbf{Q} \, X$,
with initial condition $X(0)=X_0$.
For all $t\geq 0$, we have
\begin{align*}
\frac{d}{dt} \nrm{X}_{\mathbf{P}}^2  
&= X^T \paren{\mathbf{Q}^T \mathbf{P} + \mathbf{P} \mathbf{Q}} X \\
&\leq -\lambda \, \nrm{X}^2 
\leq -\alpha \lambda \nrm{X}_{\mathbf{P}}^2,
\end{align*}
by construction of $\lambda$. As a consequence, for all $t\geq 0$, by comparison, we known that $\nrm{X(t)}_{\mathbf{P}} \leq \exp\paren{-\frac{\alpha \lambda}{2} t} \nrm{X_0}_{\mathbf{P}}$, and therefore $\nrm{\exp\paren{\mathbf{Q}t}X_0}_{\mathbf{P}}\leq \exp\paren{-\frac{\alpha \lambda}{2} t} \nrm{X_0}_{\mathbf{P}}$. Since $X_0$ was any vector, the operator norm $\nrmop{\cdot}$ associated with $\nrm{\cdot}_{\mathbf{P}}$ at the start and finish satisfies, for all $t\geq 0$, $\nrmop{\exp\paren{\mathbf{Q}t}} \leq \exp\paren{-\frac{\alpha \lambda}{2} t}$.
Being in finite dimension, all norms are equivalent so that, for some $\beta \geq 0$ independent of $\mathbf{Q}$, we have, for all $t\geq 0$, $\nrm{\exp\paren{\mathbf{Q}t}}\leq \beta\nrmop{\exp\paren{\mathbf{Q}t}}$. As result, for all $\mathbf{Q}\in\mathcal{B}$, for all $t\geq 0$, we have $\nrm{\exp\paren{\mathbf{Q}t}}\leq \beta\exp\paren{-\frac{\alpha \lambda}{2} t}$.
\end{proof}

\begin{lemma}[Uniform closeness to the stationary distribution]
\label{lem:cont_sols_chapman}

Let $\mdplref$ be a diffusion matrix.
For all $\eps>0$, we can find a ball $\mathcal{B}$ around $\mdplref$, and $\eta >0$, such that, for all initial distribution $N_0$ verifying $\abs{N_0-\statiodfmat{\mdplref}}<\eta$, for all diffusion matrix $\mdpl \in \mathcal{B}$, for all $t\geq 0$, we have
$\nrm{N_\mdpl(t) - \statiodfmat{\mdpl}} < \eps$,
where $N_\mdpl(t) = \exp\paren{t\mdpl}N_0$ is the solution starting at $N_0$ of $dN/dt=\mdpl N$.
\end{lemma}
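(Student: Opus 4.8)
The plan is to reduce the claim to the uniform exponential decay furnished by \relem{cont_fdtal_sol}, after first peeling off from $\exp\paren{t\mdpl}$ the part that does not decay. For a diffusion matrix $\mdpl$, the all-ones vector $\boldsymbol{e}$ is a left $0$-eigenvector (the columns of $\mdpl$ sum to zero), $\statiodfmat{\mdpl}$ is a right $0$-eigenvector with $\boldsymbol{e}^T\statiodfmat{\mdpl}=1$, and by Perron--Frobenius $0$ is a simple eigenvalue while every other eigenvalue has strictly negative real part. I would introduce the rank-one projector
\[
\mathbf{P}_\mdpl := \statiodfmat{\mdpl}\,\boldsymbol{e}^T ,
\]
which satisfies $\mathbf{P}_\mdpl^2=\mathbf{P}_\mdpl$, $\mdpl\mathbf{P}_\mdpl=\mathbf{P}_\mdpl\mdpl=0$, has range $\ker\mdpl$, and has kernel the $\mdpl$-invariant hyperplane $\acco{x:\boldsymbol{e}^Tx=0}$. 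A short power-series computation (using $\mathbf{P}_\mdpl$ idempotent and $\mdpl\mathbf{P}_\mdpl=0$) then gives, for all $t\geq 0$, $\exp\paren{t\mdpl}-\mathbf{P}_\mdpl=\exp\paren{t\paren{\mdpl-\mathbf{P}_\mdpl}}\paren{\id-\mathbf{P}_\mdpl}$. The point is that $\mdpl-\mathbf{P}_\mdpl$ is genuinely stable: it acts as $-\id$ on $\ker\mdpl$ and as $\mdpl$ on $\ker\mathbf{P}_\mdpl$, so its spectrum is $\acco{-1}$ together with the nonzero eigenvalues of $\mdpl$, all of strictly negative real part.

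Next I would make the decay uniform near $\mdplref$. Since the Perron eigenvector map $\mdpl\mapsto\statiodfmat{\mdpl}$ is continuous (\cite{meyer2000}, as invoked in the proof of \relem{diff_statiod_brn}), so is $\mdpl\mapsto\mdpl-\mathbf{P}_\mdpl$, and $\mdplref-\mathbf{P}_{\mdplref}$ has all eigenvalues with negative real part. Applying \relem{cont_fdtal_sol} to $\mathbf{Q}_0=\mdplref-\mathbf{P}_{\mdplref}$ provides $\lambda>0$, $\kappa\geq 0$ and a ball of matrices around $\mathbf{Q}_0$ on which $\nrm{\exp\paren{t\mathbf{Q}}}\leq\kappa\exp\paren{-\lambda t}$; pulling this ball back through the continuous map $\mdpl\mapsto\mdpl-\mathbf{P}_\mdpl$ yields a ball $\bo_1$ around $\mdplref$ and a constant $C$ (bounding $\nrm{\id-\mathbf{P}_\mdpl}$, which is uniformly bounded since $\statiodfmat{\mdpl}$ has nonnegative entries summing to one) such that, for every diffusion matrix $\mdpl\in\bo_1$ and every $t\geq 0$, $\nrm{\exp\paren{t\mdpl}-\mathbf{P}_\mdpl}\leq C\kappa\exp\paren{-\lambda t}$. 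In particular $\nrm{\exp\paren{t\mdpl}}\leq\nrm{\mathbf{P}_\mdpl}+C\kappa=:C'$ uniformly in $t\geq 0$ and $\mdpl\in\bo_1$.

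To conclude I would expand the error around the \emph{reference} distribution rather than around $\statiodfmat{\mdpl}$. Using $\boldsymbol{e}^T\statiodmref=\boldsymbol{e}^T\statiodfmat{\mdpl}=1$, the vector $\statiodmref-\statiodfmat{\mdpl}$ lies in $\ker\mathbf{P}_\mdpl$, while $\exp\paren{t\mdpl}\statiodfmat{\mdpl}=\statiodfmat{\mdpl}$; hence
\[
N_\mdpl(t)-\statiodfmat{\mdpl} = \paren{\exp\paren{t\mdpl}-\mathbf{P}_\mdpl}\paren{\statiodmref-\statiodfmat{\mdpl}} + \exp\paren{t\mdpl}\paren{N_0-\statiodmref}.
\]
The first term has norm at most $C\kappa\,\nrm{\statiodmref-\statiodfmat{\mdpl}}$, which by continuity of $\mdpl\mapsto\statiodfmat{\mdpl}$ can be forced below $\eps/2$ on a small enough ball $\bo\subseteq\bo_1$ around $\mdplref$; the second term has norm at most $C'\nrm{N_0-\statiodmref}<C'\eta$, which is below $\eps/2$ once $\eta$ is small enough. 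This gives $\nrm{N_\mdpl(t)-\statiodfmat{\mdpl}}<\eps$ for all $t\geq 0$, all diffusion matrices $\mdpl\in\bo$, and all $N_0$ with $\nrm{N_0-\statiodmref}<\eta$.

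The main obstacle is the observation behind the first step: $\exp\paren{t\mdpl}$ does not decay at all --- mass conservation keeps $0$ in the spectrum of every diffusion matrix --- so \relem{cont_fdtal_sol} cannot be applied to $\mdpl$ directly, and one must subtract the projector $\statiodfmat{\mdpl}\boldsymbol{e}^T$; it is exactly here that continuity of the Perron eigenvector $\mdpl\mapsto\statiodfmat{\mdpl}$ is needed to keep the construction uniform over a neighbourhood of $\mdplref$. A secondary subtlety is that the estimate must hold at $t=0$ too, which is why the initial perturbation is measured against $\statiodmref$ and the small discrepancy $\statiodmref-\statiodfmat{\mdpl}$ is reabsorbed through the decay estimate instead of being left as a standalone error.
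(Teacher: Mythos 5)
Your proof is correct and follows essentially the same route as the paper's: split off the stationary direction, obtain uniform exponential decay on the zero-sum complement from \relem{cont_fdtal_sol}, and absorb the discrepancy $\statiodmref-\statiodfmat{\mdpl}$ via the continuity of the stationary distribution. The only real difference is packaging --- the paper restricts $\exp\paren{t\mdpl}$ to the invariant hyperplane $\mathcal{H}$ and bounds the corresponding operator norm, whereas you subtract the rank-one spectral projector $\statiodfmat{\mdpl}\,\boldsymbol{e}^T$ and apply \relem{cont_fdtal_sol} directly to $\mdpl-\statiodfmat{\mdpl}\,\boldsymbol{e}^T$ in the full space, which is a valid (and arguably cleaner) way to invoke that lemma exactly as stated.
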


\begin{proof}
Let $\mdpl$ be a diffusion matrix. We know that
$\mathbb{R}^{\abs{\nods}} = \mathbb{R}\,\statiodfmat{\mdpl} \oplus \mathcal{H}$, where $\mathcal{H}=\enstq{\nu \in \mathbb{R}^{\abs{\nods}}}{\sum_{\node} \nu_\node = 0}$.
Let $N_0$ be an initial distribution.
We decompose
$N_0 = \alpha_\mdpl \, \statiodfmat{\mdpl} + \nu$, with $\alpha_\mdpl$ a real, and $\nu\in \mathcal{H}$.   
Indeed, $\alpha_\mdpl = \sum_{n\in\nods} N_{0,n} = 1$ since $N_0$ is a distribution.
For all $t \geq 0$, we have
$N_\mdpl \paren{t} - \statiodfmat{\mdpl} = \exp \paren{t\mdpl}N_0 - \statiodfmat{\mdpl}
= \exp\paren{t\mdpl} \paren{\statiodfmat{\mdpl} + \nu} - \statiodfmat{\mdpl}
= \exp\paren{t\mdpl}\statiodfmat{\mdpl} - \statiodfmat{\mdpl} + \exp\paren{t\mdpl}\nu
= \exp\paren{t\mdpl}\nu$,
since $\mdpl\statiodfmat{\mdpl}=0$.
Therefore, for all $t\geq 0$, we have
$\nrm{N_\mdpl \paren{t} - \statiodfmat{\mdpl}} \leq \nrmop{\exp \paren{t\mdpl}} \nrm{\nu}$.
Now, we also have 
$\nrm{\nu} = \nrm{N_0 - \Tilde{\mu}_\mdpl}
= \nrm{N_0 - \statiod + \statiod - \Tilde{\mu}_\mdpl}
 \leq \nrm{N_0 - \statiod} + \nrm{\statiod - \Tilde{\mu}_\mdpl}$.
All diffusion matrices have only eigenvalues with negative real parts in $\mathcal{H}$, so that it is true in particular for $\mdplref$ and, thanks to the proof of \relem{cont_fdtal_sol}, we may find a ball $\bo$ of diffusion matrices around $\mdplref$, and $\kappa \geq 0$ such that, for every $\mdpl\in\mathcal{B}$, we have, for all $t\geq 0$,
$\nrm{\exp\paren{t\mdpl}}_{\mathrm{op},\mathcal{H}}<\kappa$,
where $\nrm{\cdot}_{\mathrm{op},\mathcal{H}}$ is the operator norm for the restriction of matrices $\mdpl$ to the space $\mathcal{H}$. We used the fact the restriction to $\mathcal{H}$ is a continuous function of the matrix.

Let then $\eps>0$. Now, the stationary distribution of a diffusion matrix depends continuously on the matrix \cite{golubmey86}.
Therefore, upon diminishing $\bo$, for all diffusion matrix $\mdpl\in\bo$, we may assume $\nrm{\statiodmref - \Tilde{\mu}_\mdpl} \leq \varepsilon/\kappa$. Choose $N_0$ such that $\nrm{N_0-\statiodfmat{\mdplref}}< \eps/\kappa$. As a result, thanks to \relem{cont_fdtal_sol}, for $\mdpl \in \mathcal{B}$, for all $t\geq 0$, we obtain 
$\nrm{N_\mdpl \paren{t} - \mu_\mdpl}  \leq \nrm{\exp\paren{t\mdpl}}_{\mathrm{op},\mathcal{H}} \frac{\eps + \eps}{\kappa}
\leq  2\,\eps$.
\end{proof}

We now introduce the following notations.
For every diffusion matrix $\mdpl$, and every initial condition, let us define (letting the dependency of $E_+$ and $I_+$ on $\mdpl$ be implicit so as to simplify notations),
\begin{equation*}
\left\lbrace \ba
\frac{d E_+}{dt} &= \diagbeta N_\mdpl \odot I_+ + \paren{\mdpl-\diaggamma}\,E_+\\
\frac{d I_+}{dt} &= \diaggamma\,E_+ + \paren{\mdpl-\diagdelta} I_+.
\ea \right.
\end{equation*}
For every $\mdpl$, and every $t\geq 0$, define further
\begin{equation*}
\mathbf{A}(\mdpl,t)
=
\begin{pmatrix}
\mdpl-\diaggamma & \diagbeta \mathrm{Diag}\paren{N_\mdpl(t)} \\
\diaggamma & \mdpl-\diagdelta
\end{pmatrix}.
\end{equation*}
Then, in matrix notations, we have
$\cpl{\frac{d E_+}{dt}}{\frac{dI_+}{dt}}
=
\mathbf{A}(\mdpl,t)
\cpl{E_+(t)}{I_+(t)}.$

\begin{lemma}[Upper-bounding linear system]
\label{lem:upblinear-inf}
Let $\mdplref$ be a diffusion matrix. Define $\mathbf{A}=\mathbf{A}(\mdplref,\infty)$, that is
$\mathbf{A}
=
\begin{pmatrix}
\mdplref-\diaggamma & \diagbeta \mathrm{diag}\paren{\statiodmref} \\
\diaggamma & \mdplref-\diagdelta
\end{pmatrix}$.
Assume $\brn(\mdplref)<1$. 
Then, we can find a ball $\mathcal{B}$ around $\mdplref$, $\lambda > 0$, $\eta >0$, and $\kappa\geq 0$ such that, for all $\nrm{N_0-\statiodmref}<\eta$, for all diffusion matrices $\mdpl \in \mathcal{B}$, for all $t\geq 0$, we have
$\nrm{\cpl{E_+(t)}{I_+(t)}} \leq \kappa\,\exp \paren{-\lambda t} \nrm{\cpl{E_+(0)}{I_+(0)}}$.
\end{lemma}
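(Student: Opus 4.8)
The plan is to regard the nonautonomous system $\tfrac{d}{dt}\cpl{E_+}{I_+} = \mathbf{A}(\mdpl,t)\cpl{E_+}{I_+}$ as a uniformly small, time-dependent perturbation of the \emph{autonomous} system $\dot x = \mathbf{A}x$, and to exploit that $\mathbf{A}$ is Hurwitz. First I would record that $\mathbf{A} = \mathbf{A}(\mdplref,\infty)$ is exactly $\mathbf{F}_{\mdplref} + \mathbf{V}_{\mdplref}$ in the notation of the proof of \reprop{fsize-r0-smaller-one}. Since $\brn(\mdplref) = \rho\paren{-\mathbf{F}_{\mdplref}\mathbf{V}_{\mdplref}^{-1}} < 1$, and since $\mathbf{V}_{\mdplref}$ is itself Hurwitz (its diagonal blocks $\mdplref - \diaggamma$ and $\mdplref - \diagdelta$ are Metzler matrices with negative column sums, cf.\ Lemma~1 of \citearinoseventeen \citet{arino17}), the next-generation characterisation of \citet{MAT09} recalled in \resec{metapop_models} gives that all eigenvalues of $\mathbf{A}$ have strictly negative real part.

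Next I would make the perturbation uniformly small in $t$. Writing $\mathbf{A}(\mdpl,t) = \mathbf{A} + \Delta_\mdpl(t)$, the difference has only two nonzero blocks, namely $\mdpl - \mdplref$ (on the diagonal) and $\diagbeta\paren{\diag{N_\mdpl(t)} - \diag{\statiodmref}}$, so $\nrm{\Delta_\mdpl(t)} \leq C\paren{\nrm{\mdpl - \mdplref} + \nrm{N_\mdpl(t) - \statiodmref}}$ for a constant $C$ depending only on $\diagbeta$ and the norm. Now $\nrm{N_\mdpl(t) - \statiodmref} \leq \nrm{N_\mdpl(t) - \statiodfmat{\mdpl}} + \nrm{\statiodfmat{\mdpl} - \statiodmref}$: the first term is made smaller than any prescribed $\eps'$ for \emph{all} $t\geq 0$ by \relem{cont_sols_chapman}, at the cost of shrinking the ball around $\mdplref$ and choosing $\eta$ so that $\nrm{N_0 - \statiodmref} < \eta$ suffices (here $N_0$ is the conserved total population, hence a distribution, so \relem{cont_sols_chapman} applies), and the second is made $<\eps'$ by continuity of the stationary distribution \cite{golubmey86}. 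Hence, for any $\delta > 0$ fixed in advance, one obtains a ball $\bo$ and $\eta > 0$ with $\sup_{t\geq 0}\nrm{\Delta_\mdpl(t)} \leq \delta$ for all $\mdpl\in\bo$ and all admissible $N_0$.

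Finally I would run a Lyapunov estimate, mirroring the proof of \relem{cont_fdtal_sol}. Choose $\mathbf{P}$ positive definite with $\mathbf{P}\mathbf{A} + \mathbf{A}^T\mathbf{P} = -\id$ (possible as $\mathbf{A}$ is Hurwitz), set $x(t) = \cpl{E_+(t)}{I_+(t)}$, and compute $\tfrac{d}{dt}\paren{x^T\mathbf{P}x} = x^T\paren{\mathbf{P}\mathbf{A}(\mdpl,t) + \mathbf{A}(\mdpl,t)^T\mathbf{P}}x = -\nrm{x}^2 + x^T\paren{\mathbf{P}\Delta_\mdpl(t) + \Delta_\mdpl(t)^T\mathbf{P}}x \leq \paren{-1 + 2\nrm{\mathbf{P}}\,\delta}\nrm{x}^2$. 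Fixing $\delta$ small enough — this choice depends only on $\mathbf{P}$ and is made \emph{before} invoking \relem{cont_sols_chapman} — so that $-1 + 2\nrm{\mathbf{P}}\delta \leq -\tfrac12$, we get $\tfrac{d}{dt}\paren{x^T\mathbf{P}x} \leq -\tfrac12\nrm{x}^2 \leq -c\,x^T\mathbf{P}x$ with $c>0$ depending only on $\mathbf{P}$; Gronwall yields $x(t)^T\mathbf{P}x(t) \leq e^{-ct}x(0)^T\mathbf{P}x(0)$, and passing back through equivalence of norms gives the claim with $\lambda = c/2$ and a suitable $\kappa$. (Equivalently: bound $\nrm{\exp\paren{\mathbf{A}t}} \leq \kappa_0 e^{-\lambda_0 t}$ via \relem{cont_fdtal_sol}, use $x(t) = e^{\mathbf{A}t}x(0) + \int_0^t e^{\mathbf{A}(t-s)}\Delta_\mdpl(s)x(s)\,ds$, and close a Gronwall estimate on $e^{\lambda_0 t}\nrm{x(t)}$ after taking $\delta \leq \lambda_0/(2\kappa_0)$.)

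The main obstacle is precisely the \emph{uniformity in $t$} of the perturbation bound: that is what forces us through \relem{cont_sols_chapman} rather than merely using $\mathbf{A}(\mdpl,t)\to\mathbf{A}(\mdpl,\infty)$, since the frozen-coefficient argument must already be valid near $t=0$. The other point demanding care is the order of quantifiers — $\delta$ is determined by $\mathbf{A}$ alone (through $\mathbf{P}$, or through $\kappa_0,\lambda_0$) and must be chosen first, with $\bo$ and $\eta$ produced from it afterwards — whereas the Hurwitz property of $\mathbf{A}$, the existence of $\mathbf{P}$, and the Gronwall step are all routine.
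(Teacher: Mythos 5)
Your proof is correct and follows essentially the same route as the paper's: both establish that $\mathbf{A}(\mdpl,t)$ is uniformly (in $t$, in $\mdpl\in\bo$, and in the admissible $N_0$) close to the Hurwitz matrix $\mathbf{A}$ via \relem{cont_sols_chapman} together with continuity of the stationary distribution, and then close with the Lyapunov function $x\mapsto x^T\mathbf{P}x$, $\mathbf{P}\mathbf{A}+\mathbf{A}^T\mathbf{P}=-\id$, exactly as in \relem{cont_fdtal_sol}. The only cosmetic difference is that you bound the perturbation term directly by $2\nrm{\mathbf{P}}\,\delta\,\nrm{x}^2$ whereas the paper argues via continuity of the eigenvalues of $\mathbf{B}\mapsto\mathbf{P}\mathbf{B}+\mathbf{B}^T\mathbf{P}$; both enforce the same quantifier order ($\delta$ fixed from $\mathbf{P}$ first, then $\bo$ and $\eta$), which you rightly identify as the delicate point.
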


\begin{proof}
Let us start by proving that the $\mathbf{A}(\mdpl,t)$'s are uniformly close to $\mathbf{A}$, subject to some conditions we now precise. Let $\eps>0$.
Thanks to \relem{cont_sols_chapman}, we can find  a ball $\mathcal{B}$ around $\mdplref$, and $\eta >0$, such that, for all diffusion matrix $\mdpl \in \mathcal{B}$, for all $\nrm{N_0-\statiodmref}<\eta$, for all $t\geq 0$, we have
$\nrm{N_\mdpl(t) - \Tilde{\mu}_\mdpl} \leq \eps$.
Now, upon diminishing $\bo$, we may also assume that, for all diffusion matrix  $\mdpl\in\bo$, $\nrm{\statiodmref-\statiodfmat{\mdpl}}<\eps$. As a result, for all  such matrix $\mdpl\in\bo$, for all $\nrm{N_0-\statiodmref}<\eta$, for all $t\geq 0$, we have
$\nrm{N_\mdpl(t) - \statiodmref} \leq \nrm{N_\mdpl(t) - \Tilde{\mu}_\mdpl} + \nrm{\Tilde{\mu}_\mdpl - \statiodmref} \\
\leq \eps+\eps
\leq 2\,\varepsilon$.
Now, all the other coefficients of $\mathbf{A}(\mdpl,t)$ depend continuously on $\mdpl$ so that, upon diminishing $\bo$ further, we may assume that, for all diffusion matrix  $\mdpl\in\bo$, for all  $\nrm{N_0-\statiodmref}<\eta$, for all $t\geq 0$, we have
$\nrm{\mathbf{A}(\mdpl,t)-\mathbf{A}}_{\infty}<2\,\varepsilon$,
where $\nrm{\cdot}_\infty$ is the infinity norm on the coefficients of the matrix, and we conclude by invoking the equivalence of norms.

Now, $\brn(\mdplref)$ is strictly less than one so that, thanks to \citet{MAT09}, 
all the eigenvalues of $\mathbf{A}$ have negative real parts.
Proceeding as in the proof of \relem{cont_fdtal_sol}, we may find $-1 < - \lambda <0$, and a neighbourhood of $\mathbf{A}$ such that, for every matrix $\mathbf{B}$ inside it, all the eigenvalues of $\mathbf{P}\mathbf{B}+\mathbf{B}^T \mathbf{P}$ are real and strictly less than  $-\lambda$.
By what precedes, upon choosing $\bo$ and $\eta$ small enough, we have that, for every diffusion matrix $\mdpl\in\bo$, for every $\nrm{N_0-\statiodmref}<\eta$, for every $t\geq 0$, all the eigenvalues of $\mathbf{P}A(\mdpl,t)-A(\mdpl,t)^T \mathbf{P}$ are inferior to $-\lambda$.

Fix now a diffusion matrix $\mdpl\in\bo$, and $\nrm{N_0-\statiodfmat{\mdpl}}<\eta$. Let then $X_0$ be a vector, and $X=\cpl{E_+}{I_+}$ be the solution of
$\frac{dX}{dt} = A(\mdpl,t) \, X$
with $X(0)=X_0$. Again, as in the proof of \relem{cont_fdtal_sol}, we obtain some $\alpha \geq 0$ independent of $\mdpl$ such that, for all $t\geq 0$, we have
$\nrm{X(t)}_P \leq Y(t)^{1/2} \leq \exp \paren{\frac{-\lambda \alpha}{2} t} \nrm{X_0}_P$,
where $Y$ is the solution, for $t \geq 0$, of $\frac{dY}{dt} = -\lambda \alpha Y$, with initial condition $Y(0) = \nrm{X_0}^2$.

Thanks to the equivalence of norms, we may find $\kappa \geq 0$ such that, for all $t\geq 0$, we have
$\nrm{X(t)} \leq \kappa\,\exp \paren{\frac{-\lambda \alpha}{2} t} \nrm{X_0}$.
This stands for any diffusion matrix $\mdpl\in\bo$, and any $\nrm{N_0-\statiodmref}<\eta$, so that we have proven our claim.
\end{proof}

\begin{lemma}[Comparison] 
\label{lem:upbounding-inf-comp}
Assume $\brn(\mdplref)<1$.
There exists a ball $\bo$ around $\mdplref$, $\lambda>0$, $\eta>0$ and $\kappa\geq 0$ such that, for any diffusion matrix $\mdpl\in\bo$, for all initial distribution $\nrm{N_0-\statiodmref}<\eta$, for all $t\geq 0$, we have
$\cpl{E^\mdpl(t)}{I^\mdpl(t)}
\leq
\cpl{E^\mdpl_+(t)}{I^\mdpl_+(t)}$,
and $\nrm{\cpl{E^\mdpl(t)}{I^\mdpl(t)}} \leq \kappa\,\exp \paren{-\lambda t} \nrm{\cpl{E(0)}{I(0)}}$,
where $\cpl{E^\mdpl(t)}{I^\mdpl(t)}$ are the $E$ and $I$ coordinates of the system of \reeq{seir-metapop} when the diffusion matrix is $\mdpl$, and the initial population is distributed according to $S(0)+E(0)+I(0)+R(0)=N_0$, and $\cpl{E_+^\mdpl}{I_+^\mdpl}$ are introduced before \relem{upblinear-inf}, and have initial condition $\cpl{E(0)}{I(0)}$. 
\end{lemma}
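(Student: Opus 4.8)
The plan is to deduce the coordinate-wise inequality from conservation of the total population together with the comparison theorem for cooperative systems, and then to transfer the uniform exponential decay of $\cpl{E_+^\mdpl}{I_+^\mdpl}$ established in \relem{upblinear-inf} to $\cpl{E^\mdpl}{I^\mdpl}$. Throughout, the ball $\bo$ and the radius $\eta$ will be those of \relem{upblinear-inf}, possibly shrunk a finite number of times.

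First I would observe that summing the four lines of \reeq{seir-metapop} makes every reaction term cancel, so the total population $N^\mdpl := S^\mdpl+E^\mdpl+I^\mdpl+R^\mdpl$ solves $dN^\mdpl/dt = \mdpl N^\mdpl$, hence $N^\mdpl(t)=\exp\paren{t\mdpl}N_0 = N_\mdpl(t)$, the quantity introduced just before \relem{upblinear-inf}. Since the solution of \reeq{seir-metapop} stays nonnegative, this yields $S^\mdpl(t)\leq N_\mdpl(t)$ coordinate-wise for all $t\geq 0$. Moreover, as $\statiodmref$ is a positive vector, for $\eta$ small enough every $N_0$ with $\nrm{N_0-\statiodmref}<\eta$ is coordinate-wise positive, and then $N_\mdpl(t)=\exp\paren{t\mdpl}N_0\geq 0$ because $\mdpl$ is Metzler; in particular the matrix $\mathbf{A}(\mdpl,t)$ defined before \relem{upblinear-inf} has nonnegative off-diagonal entries, i.e. is Metzler, uniformly over $\mdpl\in\bo$ and over such $N_0$.

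Next I would set up the comparison. From $S^\mdpl(t)\leq N_\mdpl(t)$ and $I^\mdpl(t)\geq 0$ one gets $\diagbeta S^\mdpl(t)\odot I^\mdpl(t)\leq \diagbeta N_\mdpl(t)\odot I^\mdpl(t)$, so $\cpl{E^\mdpl}{I^\mdpl}$ is a subsolution of the linear system $\frac{d}{dt}\cpl{E_+}{I_+}=\mathbf{A}(\mdpl,t)\cpl{E_+}{I_+}$ with the same initial condition $\cpl{E(0)}{I(0)}$ (the $I$-line of \reeq{seir-metapop} coinciding exactly with the $I_+$-equation). Since $\mathbf{A}(\mdpl,t)$ is Metzler the system is cooperative, so Theorem~B.1 of \citet{smith95} gives $\cpl{E^\mdpl(t)}{I^\mdpl(t)}\leq\cpl{E_+^\mdpl(t)}{I_+^\mdpl(t)}$ for all $t\geq 0$, which is the first assertion.

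Finally, for the exponential bound, from $0\leq\cpl{E^\mdpl(t)}{I^\mdpl(t)}\leq\cpl{E_+^\mdpl(t)}{I_+^\mdpl(t)}$ coordinate-wise I would pass to $\ell^1$-norms and then use equivalence of norms to obtain $\nrm{\cpl{E^\mdpl(t)}{I^\mdpl(t)}}\leq C\,\nrm{\cpl{E_+^\mdpl(t)}{I_+^\mdpl(t)}}$ for a constant $C$ depending only on the fixed norm. Applying \relem{upblinear-inf} furnishes $\lambda>0$ and $\kappa'\geq 0$, uniform in $\mdpl\in\bo$ and in $N_0$, with $\nrm{\cpl{E_+^\mdpl(t)}{I_+^\mdpl(t)}}\leq\kappa'\exp\paren{-\lambda t}\nrm{\cpl{E(0)}{I(0)}}$, so $\kappa:=C\kappa'$ does the job; the final ball and $\eta$ are the intersections of those produced along the way. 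The only genuinely non-mechanical point is ensuring that $\mathbf{A}(\mdpl,t)$ is cooperative uniformly over $\bo$ — equivalently, that $N_\mdpl(t)\geq 0$ — which is precisely where positivity of $\statiodmref$ forces us to shrink $\eta$; the rest is conservation of population together with direct appeals to \relem{upblinear-inf} and the comparison theorem.
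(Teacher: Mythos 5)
Your proof is correct and follows essentially the same route as the paper: bound $S^\mdpl(t)$ by the total population $N_\mdpl(t)$, observe that $\mathbf{A}(\mdpl,t)$ is Metzler so the linear system is cooperative, invoke Theorem~B.1 of \citet{smith95} with the same initial condition, and then transfer the uniform exponential decay from \relem{upblinear-inf}. You merely make explicit two points the paper leaves implicit — that the total population evolves by pure diffusion so $N_\mdpl(t)=\exp\paren{t\mdpl}N_0$ is nonnegative (hence $\mathbf{A}(\mdpl,t)$ is indeed Metzler), and the passage from the coordinate-wise inequality to the norm bound — both of which are sound.
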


\begin{proof}
Choose $\bo$, $\lambda$, $\eta$ and $\kappa$ as in \relem{upblinear-inf}. 
Fix an initial condition $\paren{S(0), E(0), I(0), R(0)}$ such that $\nrm{N_0-\statiodmref}<\eta$. Fix a diffusion matrix $\mdpl\in\bo$. We drop the \gu{$\mdpl$ exponents} to simplify the notations. 
By definition, we know that, for all $t\geq 0$, we have
$\frac{d E}{dt} = \diagbeta S \odot I-\diaggamma\,E+\mdpl\,E$, and
$\frac{d I}{dt} = \diaggamma\,E-\diagdelta I + \mdpl I$.
Now, for all $t\geq 0$, we know that $S(t) \leq N_\mdpl(t) = S(t) + E(t) + I(t) + R(t)$.
As a result, for all $t\geq 0$, we have $\frac{d E}{dt} \leq \diagbeta N_\mdpl(t) \odot I-\diaggamma\,E+\mdpl\,E$,
so that
$\cpl{\frac{d E}{dt}}{\frac{dI}{dt}}
\leq
\mathbf{A}(\mdpl,t)
\cpl{E(t)}{I(t)}$.
Now, for all $t\geq 0$, $\mathbf{A}(\mdpl,t)$ is a Metzler matrix, so that, thanks to Section 5.5 of \cite{sallet18}, $f(t,X) = \mathbf{A}(\mdpl,t)X$ is of type $K$. As a consequence, we may use the comparison Theorem B.1 of \citet{smith95} to obtain that, for all $t\geq 0$,
$\cpl{E(t)}{I(t)}
\leq
\cpl{E_+(t)}{I_+(t)}$.
\end{proof}

We can now prove \relem{unif-stab-dfe}. 


\begin{proof}
Thanks to \relem{cont_sols_chapman} and \relem{upbounding-inf-comp}, there exists a ball $\bo$ around $\mdplref$, $\lambda>0$, $\eta>0$ and $\kappa\geq 0$ such that, for any diffusion matrix $\mdpl\in\bo$, for all initial distribution $\nrm{N_0-\statiodmref}<\eta$, for all $t\geq 0$, we have
$\nrm{N_\mdpl(t)-\statiodfmat{\mdpl}} < \eps$,
and
$\nrm{\cpl{E^\mdpl(t)}{I^\mdpl(t)}} \leq \kappa\,\exp \paren{-\lambda t} \nrm{\cpl{E(0)}{I(0)}}$.
Fix a diffusion matrix $\mdpl\in\bo$ and some initial condition satisfying the requirements above.
Note that by assumption $\nrm{\cpl{E(0)}{I(0)}} \leq \eta$.
Hence, for all $t\geq 0$, we have
$\nrm{\cpl{E^\mdpl(t)}{I^\mdpl(t)}} \leq \kappa\,\eta\,\exp \paren{-\lambda t}$.
As a result, for all $t\geq 0$, we have
\begin{align*}
&\nrm{\paren{S^\mdpl(t), E^\mdpl(t), I^\mdpl(t), R^\mdpl(t)} - \paren{\statiodfmat{\mdpl}, 0, 0, 0}}  \\
\leq &\nrm{S^\mdpl(t)-\statiodfmat{\mdpl}} + \nrm{E^\mdpl(t)} + \nrm{I^\mdpl(t)} + \nrm{R^\mdpl(t)} \\
\leq &\nrm{N_\mdpl(t) - \statiodfmat{\mdpl}} + 2\nrm{E^\mdpl(t)} + 2\nrm{I^\mdpl(t)} + 2\nrm{R^\mdpl(t)} \\
\leq & \quad \eps + 4 \, \kappa \, \eta + 2\nrm{R^\mdpl(t)}
\end{align*}
since, for all $t\geq 0$, $S^\mdpl(t)+E^\mdpl(t)+I^\mdpl(t)+R^\mdpl(t) = N_\mdpl(t)$. 
Now, for all $t\geq 0$, $\nrm{R^\mdpl(t)} \leq \sum_\node R_\node^\mdpl(t)\leq \sum_\node R_\node^\mdpl(\infty)$, as the individuals who arrive in some compartment $R$ stay there indefinitely.
Now, for all $t\geq 0$, we have
$\sum_{\node\in\nods} \frac{d R_\node(t)}{dt} = \sum_{\node\in\nods} \delta_\node I_\node(t)$,
so that, for all $t\geq 0$, we have
\begin{multline*}
\sum_{\node\in\nods}R_\node(\infty) - \sum_{\node\in\nods} R_\node(0) =
\sum_{\node\in\nods} \delta_\node \int_0^\infty I_\node(t) dt \\
\leq \kappa\,\eta \, \sum_{\node\in\nods} \delta_\node \int_0^\infty \exp\paren{-\lambda t} dt
= \frac{\kappa\,\eta }{\lambda} \, \sum_{\node\in\nods} \delta_n.
\end{multline*}
As a result, for all $t\geq 0$, remembering $\sum_{\node\in\nods}R_\node(0) \leq \eta N$, we have
\begin{multline*}
\nrm{\paren{S^\mdpl(t), E^\mdpl(t), I^\mdpl(t), R^\mdpl(t)} - \paren{\statiodfmat{\mdpl}, 0, 0, 0}} 
\\ \leq
\eps + 4 \, \kappa \, \eta + 2\eta N + 2\frac{\kappa\,\eta }{\lambda} \, \sum_{\node\in\nods} \delta_n.
\end{multline*}
Upon diminishing $\eta$, 
we have therefore proven our claim.
\end{proof}



\section{SEPIR Model}
\label{app:sepir}

\subsection{Model}

The SEPIR model is the extension to a reaction-diffusion of the scalar model introduced in \citet{alizon:hal-02882687} under the name \gu{SEAIR}. However, the \gu{A} for \gu{Asymptomatic} compartment in the reference behaves in fact like a \gu{Pre-symptomatic} compartment, hence renaming it \gu{P}.  The difference with SEIR is that it comprises two infectious stages, with different levels of infectiousness (the \gu{$\diagbeta$} factors are different). Compared to the SEIR metapopulation model defined in \reeq{seir-metapop},
one compartment is added, the $P$ for \gu{Pre-symptomatic} compartment, and two additional matrices of parameters are needed. The diagonal $\diagbeta^P$ matrix describes the infection of susceptible individuals by pre-symptomatic individuals, while the diagonal $\diagalpha$ matrix describes the rate at which individuals leave the pre-symptomatic compartment - the $\diagbeta$ matrix of SEIR is named $\diagbeta^I$ here. All diagonal coefficients of $\diagbeta^P$ and $\diagalpha$ are positive.
The population on the graph thus follows the dynamics 
\begin{equation*}
\label{eq:seair_graphe_vec}
\left\lbrace \ba
\frac{d S}{dt} &= -\diagbeta^I S \odot I -\diagbeta^P S \odot P + \mdpl S \\
\frac{d E}{dt} &= \diagbeta^I S \odot I + \diagbeta^P S \odot P -\diagalpha\,E+\mdpl\,E\\
\frac{d P}{dt} &= \diagalpha \,E-\diaggamma P + \mdpl P\\
\frac{d I}{dt} &= \diaggamma\,P-\diagdelta I + \mdpl I\\
\frac{d R}{dt} &= \diagdelta I + \mdpl R.
\ea \right.
\end{equation*}

Then, we computed the next-generation matrix of the model, as 
\begin{multline*}
\ngm_{\mdpl} = \diagbeta^P \diag{\statiod}\paren{\id-\paren{\diagbeta^P}^{-1}\diagbeta^I\,\paren{\mdpl-\diagdelta}^{-1}\,\diaggamma} \\
\times \paren{\mdpl -\diaggamma}^{-1}\,\diagalpha\,\paren{\mdpl -\diagalpha}^{-1}.
\end{multline*}
We were able to check it is positive. Indeed, the only factor which is not of the type of those for the SEIR model is $\id-\paren{\diagbeta^P}^{-1}\diagbeta^I\,\paren{\mdpl-\diagdelta}^{-1}\,\diaggamma$. Now, as in \relem{diff_statiod_brn},
$-\paren{\mdpl-\diagdelta}^{-1}$ is positive, therefore, since $\diagbeta^P$, $\diagbeta^I$ and $\diaggamma$ have positive diagonal coefficients, $-\paren{\diagbeta ^P}^{-1}\diagbeta^I\,\paren{\mdpl-\diagdelta}^{-1}\,\diaggamma$ is also positive, and it remains true when adding $\id$. 
Therefore, the basic reprodution number is differentiable for the SEPIR model as well, which justifies we can optimise the policies for this model as we did in the case of SEIR.

We now give explicit expressions for the losses defined in \resec{losses_control_pol},
for the SEPIR model. The first one is the epidemic loss, \epidaccr, defined by $\epidloss(\prm)=\rho\paren{ \mathbf{G}(\theta)}$.
The second loss, \alphalaccr, is the limit, when $\tau \to \infty$, of the basic reproduction number of the system of \reeq{seir-metapop},
when the diffusion is replaced by $\mdpl/\tau$. It is defined by, for every $\prm$,
\begin{equation*}
\alphaloss (\prm) = \mathcal{S}_{a} \biggl( (\diagbeta^I\diagdelta^{-1} + \diagbeta^P\diaggamma^{-1})\statiodprm \biggl),
\end{equation*}
where $\mathcal{S}_{a}: \mathbb{R}^{|\mathcal{N}|} \to \mathbb{R}$, is the $a$-smooth max function, as defined in \resec{losses_control_pol}.

Conversely, the third loss, \rapsumaccr, is the limit, when $\tau\to 0$, of the basic reproduction number, defined by, for every parameter $\prm$, 
\begin{equation*}
\rapsumloss (\prm) = \frac{\sum_\node \beta_\node^I \statiodprm(\node)^2}{\sum_\node \delta_\node \statiodprm(\node)} + \frac{\sum_\node \beta_\node^P \statiodprm(\node)^2}{\sum_\node \gamma_\node \statiodprm(\node)}.
\end{equation*}

\subsection{Basic Reproduction Number and Final Size}
\label{app:sepir_brn_fsize}
\begin{figure}
    \centering
    \includegraphics[width=.9\linewidth]{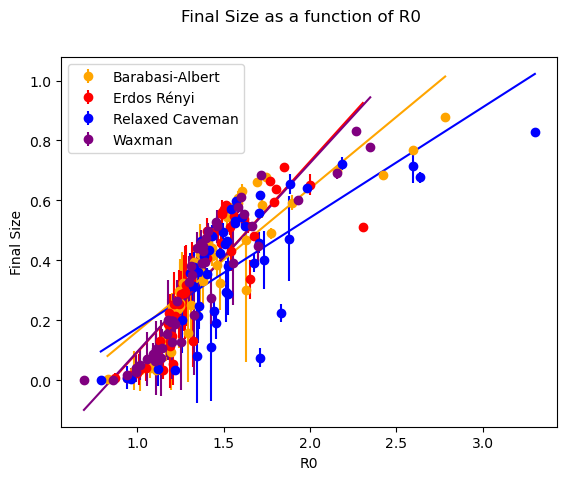}
    \caption{Final Size as a Function of the $\brn$, for Various Graphs of Size $30$, for a SEPIR reaction. The lines are the regression lines.}
    \label{fig:influence_brn_seair}
\end{figure}
On \refig{influence_brn_seair},  we display the final size as a function of the basic reproduction number, for four random graphs, for the SEPIR model. The lines on the plot are the regression lines. We see the final size diminishes when the basic reproduction number diminishes, and quicker than for SEIR.

\end{document}